\def\be{e}
\def\bi{n}
\def\oq{\omega}
\def\aq{\alpha}
\def\bq{\beta}
\def\tq{\theta}
\def\ro{\rho}
\def\lqq{\lambda}
\def\rmd{{\rm d}}
\def\rmi{{\rm i}}
\def\vk{\varkappa}
\def\sg{\sigma}
\def\bq{\mathbf{q}}
\def\mB{\mathfrak{B}}
\newcommand{\bb}[1]{\langle #1 \rangle}
\newcommand{\tl}[1]{{\tilde #1}}
\newcommand{\ov}[1]{{\overline #1}}
\newcommand{\ds}{\displaystyle}
\newcommand {\bR}{\mathbf{R}}
\def\mg{\mathfrak{g}}
\def\mh{\mathfrak{h}}
\newcommand{\id}[1]{\mathop{\rm id}\nolimits_{#1}}
\newtheorem{theorem}{Theorem}
\newtheorem{propos}{Proposition}
\newtheorem{lemma}{Lemma}
\newtheorem{corol}{Corollary}
\begin{document}

\title{Some applications of differential geometry in the theory of mechanical systems\footnote{Submitted on July 29, 1977.}}

\author{M.P.\,Kharlamov\footnote{Donetsk Physico-Technical Institute.}}

\date{}

\maketitle

\begin{center}
{\bf Published: \ \ \textit{Mekh. Tverd. Tela}, 1979, No. 11, pp. 37--49}\footnote{Russian Journal ``Mechanics of Rigid Body''.}

\vspace{3mm}

\href{http://www.ams.org/mathscinet-getitem?mr=536269}{http://www.ams.org (Reference)}

\vspace{1mm}

\href{http://www.ics.org.ru/doc?pdf=1157\&dir=r}{http://www.ics.org.ru (Russian)}

\vspace{1mm}

\href{https://www.researchgate.net/publication/253671029}{https://www.researchgate.net (Russian)}

\end{center}

\begin{abstract}
In the present paper, some concepts of modern differential geometry are used as a basis to develop an invariant theory of mechanical systems, including systems with gyroscopic forces. An interpretation of systems with gyroscopic forces in the form of flows of a given geodesic curvature is proposed. For illustration, the problem of the motion of a rigid body about a fixed point in an axially symmetric force field is examined. The form of gyroscopic forces of the reduced system is calculated. It is shown that this form is a product of the momentum constant, the volume form of the 2-sphere, and an explicitly written everywhere positive function on the sphere.
\end{abstract}

\vspace{3mm}

\section{Introduction}
Qualitative investigation of the problems in classical mechanics uses, during the last years, a widening area of mathematical disciplines. This, in turn, supposes the high level of formalization in the description of corresponding mechanical systems. Such level is already achieved in Hamiltonian mechanics, and the abstract theory of Hamiltonian systems gave a lot of perfect results. As far as the Lagrangian mechanics is concerned, its contemporary presentations \cite{Godbil,Abra} sometimes become too huge when applied to concrete systems. Moreover in mechanics, there exist systems which globally do not have a quadratic Lagrangian \cite{Kh1976,KhFuncAn}. Thus we come to a necessity, on one hand, to simplify and, on the other hand, to generalize the basic notions of the theory. Such an attempt is made in this article. Note that the presence of local coordinates in some proofs is not inevitable; in fact, one can make all reasoning invariant (not using any coordinates).

Basing on a given formalism, we describe the reduction in mechanical systems with symmetry. At the same time, some global relations between the used objects of the differential geometry are revealed. With this approach, the reduced system is still interpreted as a mechanical one.

\section{Natural systems} A natural mechanical system is a triple
\begin{equation}\label{eq01}
  (M,m,V),
\end{equation}
where $M$ is a smooth manifold (the configuration space of the system), $m$ is a Riemannian metric on $M$, and $V$ is a function on $M$ (the potential function of the system, or shortly, the potential). The metric $m$ generates on the tangent space $T(M)$ the function
\begin{equation}\label{eq02}
  K(w)=\frac{1}{2}\bb{w,w}, \quad w\in T(M).
\end{equation}
Here $\bb{,}$ denotes the scalar product in the metric $m$. The function $K$ is called the kinetic energy of system \eqref{eq01}.

For any manifold $N$ we denote by $p_N:T(N)\to N$ the projection to the base of a tangent bundle. The total energy of \eqref{eq01} is the function $H$ on $T(M)$ defined as
\begin{equation}\label{eq03}
  H=K+V\circ p_M.
\end{equation}

Let $w\in T(M)$. In the tangent space $T_w T(M)$ we define the linear form $\tq_w$ by putting for each $X\in T_w(T(M))$
\begin{equation}\label{eq04}
\tq_w(X)=\bb{w,Tp_M(X)}.
\end{equation}

\begin{propos}\label{prop1}
The map
\begin{equation}\label{eq05}
  \tq: w\mapsto \tq_w
\end{equation}
defines a differential $1$-form on $T(M)$. Its exterior derivative
\begin{equation}\label{eq06}
  \sg=\rmd \tq
\end{equation}
makes $T(M)$ a symplectic manifold.
\end{propos}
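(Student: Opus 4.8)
The plan is to realize $\tq$ as the pull-back of the canonical $1$-form of the cotangent bundle under the ``index-lowering'' bundle isomorphism determined by the metric, so that smoothness of $\tq$ and the symplectic property of $\sg$ are both inherited from the classical structure on $T^*(M)$.

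Concretely, I would first introduce $\flat\colon T(M)\to T^*(M)$, $\flat(w)=\bb{w,\cdot}$ for $w\in T_q(M)$. Since $m$ is smooth and everywhere non-degenerate, $\flat$ is a smooth vector-bundle isomorphism over $\id{M}$, hence a diffeomorphism of total spaces with $\pi\circ\flat=p_M$, where $\pi\colon T^*(M)\to M$. Writing $\lqq$ for the tautological $1$-form on $T^*(M)$, $\lqq_\xi(Y)=\xi\bigl(T\pi(Y)\bigr)$, a short computation from \eqref{eq04} gives, using the chain rule $T(\pi\circ\flat)=Tp_M$,
\[
  (\flat^*\lqq)_w(X)=\lqq_{\flat(w)}\bigl(T\flat(X)\bigr)=\flat(w)\bigl(T(\pi\circ\flat)(X)\bigr)=\flat(w)\bigl(Tp_M(X)\bigr)=\bb{w,Tp_M(X)}=\tq_w(X),
\]
so $\tq=\flat^*\lqq$. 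As the pull-back of a smooth form under a smooth map, $\tq$ is a smooth $1$-form, which is the first assertion. (Equivalently, in induced coordinates $(q^i,v^i)$ on $T(M)$ one reads off $\tq=m_{ij}(q)\,v^i\,\rmd q^j$, manifestly smooth.)

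For the symplectic assertion, $\sg=\rmd\tq=\flat^*(\rmd\lqq)$ is the pull-back under the diffeomorphism $\flat$ of the canonical symplectic form $\rmd\lqq$ on $T^*(M)$; being exact it is closed, and being the pull-back of a non-degenerate form under a diffeomorphism it is non-degenerate. If one prefers not to invoke $T^*(M)$, the same follows from the coordinate expression
\[
  \sg=m_{ij}\,\rmd v^i\wedge\rmd q^j+\frac{\partial m_{ij}}{\partial q^k}\,v^i\,\rmd q^k\wedge\rmd q^j ,
\]
whose coefficient matrix in the basis $(\rmd q^i,\rmd v^j)$ has block form $\left(\begin{smallmatrix}A&-m^{\mathsf T}\\ m&0\end{smallmatrix}\right)$ with $A$ skew and $m=(m_{ij})$, hence determinant $(\det m)^2>0$.

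There is no serious obstacle: this is the standard fact that a Riemannian metric transports the canonical symplectic structure from $T^*(M)$ to $T(M)$, and every step is formal once the tautological form is in play. The sole place the hypotheses are genuinely used is the invertibility of $(m_{ij})$ --- that $m$ is a metric, not merely a symmetric $2$-tensor --- which is exactly what makes $\flat$ an isomorphism and $\sg$ non-degenerate. The only real care needed is the bookkeeping with the differentials $Tp_M$, $T\flat$, $T\pi$ when checking $\tq=\flat^*\lqq$.
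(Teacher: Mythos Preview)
Your argument is correct. The identification $\tq=\flat^*\lqq$ is checked cleanly, and from it both assertions follow at once: smoothness because $\flat$ and $\lqq$ are smooth, and the symplectic property because $\rmd\lqq$ is symplectic and $\flat$ is a diffeomorphism. The coordinate alternative you sketch is also fine; your block matrix has determinant $(\det m)^2$ just as you say.

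The paper's own proof does not pass through $T^*(M)$ at all. It works entirely in natural coordinates $(\bq,\dot\bq)$: it writes $\tq=a_{ij}(\bq)\dot q^i\,\rmd q^j$ to see smoothness directly, then differentiates to get $\sg$ and observes that its coefficient matrix has the block shape $\left(\begin{smallmatrix}*&-A\\A&0\end{smallmatrix}\right)$ with $\det=(\det A)^2\ne0$. So the paper's route is exactly your ``if one prefers not to invoke $T^*(M)$'' paragraph, and nothing more. Your primary argument via the musical isomorphism is genuinely different in spirit: it is coordinate-free and explains the result as a transport of the canonical structure, at the cost of presupposing the tautological form and its non-degeneracy on $T^*(M)$. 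The paper's approach is more self-contained and elementary---no cotangent bundle, no Liouville form---but less illuminating as to \emph{why} the construction works. Either is adequate here; your version has the advantage that it immediately identifies $\tq,\sg$ with the objects the later text (via \cite{Abra}) calls the Lagrange forms associated to $\mathbf{F}K$.
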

\begin{proof} For natural coordinates \cite{Abra,Kh1976} $(\bq, \dot \bq)$ in a neighborhood of $w\in T(M)$ the differential forms $(\rmd \bq,\rmd \dot \bq)$ give a basis in each overlying fiber of the cotangent space $T^*(T(M))$. If $A=\|a_{ij}(\bq)\|$ is the definitely positive symmetric matrix of the metric $m$
\begin{equation*}
  m_{\bq}(\dot \bq_{(1)},\dot \bq_{(2)})=a_{ij}(\bq) \dot q_{(1)}^i \dot q_{(2)}^j,
\end{equation*}
then according to \eqref{eq04} the map \eqref{eq05} has the form
\begin{equation}\label{eq07}
  \tq_{(\bq,\dot \bq)} = a_{ij}(\bq)\dot q^i \rmd q^j,
\end{equation}
and, in particular, is smooth. Hence, \eqref{eq05} is a smooth section of $T^*(T(M))$, i.e., a differential 1-form.

Recall that a symplectic structure on a manifold is a closed non-degenerate 2-form on it. Obviously, \eqref{eq06} is closed. Applying the exterior derivative to \eqref{eq07}, we obtain
\begin{equation*}
  \sg_{(\bq, \dot \bq)}=a_{ij}(\bq)\rmd \dot q^i \wedge \rmd \dot q^j+ \dot q^i\frac{\partial a_{ij}(\bq)}{\partial q^k} \rmd q^k \wedge \rmd q^j,
\end{equation*}
therefore the matrix of the form $\sg$
\begin{equation}\label{eq08}
  S=\begin{Vmatrix} * & -A \\ A & 0 \end{Vmatrix}
\end{equation}
has a non-zero determinant $\det S =(\det A)^2$. Hence $\sg$ is non-degenerate.
\end{proof}

The differential forms $\tq$ and $\sg$ defined by \eqref{eq04}\,--\,\eqref{eq06} will be called the Lagrange forms on $T(M)$ generated by the metric $m$. Note that in terms of the book \cite{Abra}, the Lagrange forms generated by the Legendre transformation $\mathbf{F} K$ of the function \eqref{eq02} are $\tq$ and $(-\sg)$.

Let $\rmi_Y\oq$ denote the inner product of a vector field $Y$ and a form $\oq$. For the function \eqref{eq03} and the non-degenerate form \eqref{eq06} there exists a unique vector field $X$ on $T(M)$ such that
\begin{equation}\label{eq09}
  \rmi_X\sg =-\rmd H.
\end{equation}
The field $X$ is a second-order equation \cite{Abra,Godbil} $Tp_M\circ X=\id{T(M)}$, so each integral curve $w(t)$ of the field $X$ is the derivative of its projection
\begin{equation*}
  w(t)=(p_M\circ w)'(t).
\end{equation*}
For an integral curve $w(t)$ of the field $X$, we call $x(t)=p_M\circ w(t)$ a \textbf{motion} in system~\eqref{eq01}.

\begin{propos}\label{prop2}
The total energy \eqref{eq03} is a first integral of system \eqref{eq01}.
\end{propos}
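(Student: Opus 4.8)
The plan is to show that $H$ is constant along every integral curve of the Hamiltonian vector field $X$ defined by \eqref{eq09}. This is the standard conservation-of-energy argument for Hamiltonian flows, so the proof should be short and coordinate-free. First I would recall that for a smooth function $f$ and a vector field $Y$ on $T(M)$, the derivative of $f$ along the flow of $Y$ is $Y(f)=\rmd f(Y)=\rmi_Y\,\rmd f$. Applying this with $f=H$ and $Y=X$ gives $X(H)=\rmi_X\,\rmd H$.

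Next I would use the defining relation \eqref{eq09}, namely $\rmi_X\sg=-\rmd H$, to rewrite $X(H)=\rmi_X\,\rmd H = -\rmi_X(\rmi_X\sg)=-\sg(X,X)$. Since $\sg$ is a $2$-form, it is alternating, so $\sg(X,X)=0$; hence $X(H)=0$. Therefore $H$ is constant along every integral curve $w(t)$ of $X$, i.e.\ $\frac{\rmd}{\rmd t}H(w(t))=0$, which is precisely the assertion that the total energy is a first integral of the system.

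There is essentially no obstacle here: the whole content is the antisymmetry of the $2$-form $\sg$ combined with the uniqueness/existence of $X$ already established after \eqref{eq09}. The only point worth stating carefully is the identification of the flow-derivative of a function with the contraction $\rmi_Y\,\rmd f$, and the nested contraction identity $\rmi_X\rmi_X\sg=\sg(X,X)$; both are elementary facts about differential forms. One could alternatively phrase the computation via the Lie derivative, using $\mathcal{L}_X H = \rmi_X\,\rmd H$ and noting $\rmi_X\,\rmd H=-\rmi_X\rmi_X\sg=0$, but the direct contraction argument is the cleanest and avoids introducing notation the paper has not set up.
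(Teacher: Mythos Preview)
Your proof is correct and is essentially the same as the paper's: the paper simply remarks that by \eqref{eq09} the function $H$ is the Hamilton function of $X$ on the symplectic manifold $(T(M),\sg)$, and you have written out the standard one-line computation $X(H)=\rmi_X\,\rmd H=-\sg(X,X)=0$ that underlies this remark.
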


Indeed, according to \eqref{eq09}, $H$ is the Hamilton function for the field $X$ on the symplectic manifold $(T(M),\sg)$.

The Maupertuis principle gives the following geometric interpretation of motions in a natural system.
\begin{theorem}[\cite{Arnold}]\label{theo1}
Let $h$ be a constant such that the region $M_h=\{x\in M: V(x)<h\}$ is not empty. Define the Riemannian metric $m_h=2(h-V)m$ in $M_h$. Then the motions in the natural system \eqref{eq01} having the energy constant $h$ are geodesics of the metric $m_h$.
\end{theorem}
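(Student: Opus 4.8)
The plan is to reduce the theorem to a comparison of two variational problems by means of the Maupertuis principle, which in the present symplectic setting is exactly the content of the cited reference \cite{Arnold}. First I would recall that principle in the form: a curve $w(\cdot)$ on the energy level $\Sigma_h=\{H=h\}$ is, up to reparametrization, an integral curve of the field $X$ if and only if it makes the abbreviated action $\int\tq$ stationary among $C^1$ curves on $\Sigma_h$ with fixed projected endpoints in $M$. Since $X$ is a second-order equation, along an integral curve $Tp_M(\dot w(t))=w(t)$, so by \eqref{eq04} the integrand equals $\tq_{w(t)}(\dot w(t))=\bb{w(t),w(t)}=2K(w(t))$; thus $\int\tq=\int 2K\,\rmd t$ along a motion.

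The second step uses conservation of energy (Proposition \ref{prop2}): on $\Sigma_h$ one has $K=(h-V)\circ p_M$, which is positive on $M_h$, so along a motion $w(t)$ with energy $h$ the $m$-speed of the projection $x(t)=p_M\circ w(t)$ equals $\|w(t)\|_m=\sqrt{2(h-V)(x(t))}$. Writing $\rmd s$ for the $m$-arclength element of $x(\cdot)$, so that $\rmd s=\|w\|_m\,\rmd t$ because $w=\dot x$, we get
\[
  \int\tq=\int\|w\|_m^2\,\rmd t=\int\|w\|_m\,\rmd s=\int\sqrt{2(h-V)}\,\rmd s ,
\]
and $\sqrt{2(h-V)}\,\rmd s$ is precisely the arclength element of the Jacobi metric $m_h=2(h-V)m$. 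Hence the abbreviated action of an energy-$h$ motion equals the $m_h$-length of its projection, and the Maupertuis extremality of $w(\cdot)$ on $\Sigma_h$ translates into extremality of the $m_h$-length of $x(\cdot)$, i.e. $x(\cdot)$ is an $m_h$-geodesic. For the reverse inclusion — which upgrades this to a bijective correspondence of unparametrized trajectories — I would lift a given $m_h$-geodesic to $\Sigma_h$ by the unique fibrewise rescaling $w\mapsto\lqq(p_M w)\,w$ that enforces $K=(h-V)\circ p_M$, reparametrize it so the lift becomes a second-order curve, and run the same computation backwards, invoking the uniqueness of $X$ in \eqref{eq09}.

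I expect the only genuine difficulty to be the bookkeeping of reparametrization and of the lift/projection correspondence. A curve in $M_h$ underlies many curves in $T(M)$, and only the one with $m$-speed $\sqrt{2(h-V)}$ lies on $\Sigma_h$, while an additional time change is needed for that lift to equal the velocity of its own projection; one must check that these two normalizations are compatible and that the classes of admissible variations in the two Maupertuis problems correspond under lifting. There is also an equivalent, purely Hamiltonian derivation within the symplectic language of this paper — using that the one-dimensional characteristic direction of $\sg$ along the hypersurface $\Sigma_h$ is spanned by $X$, depends only on $\Sigma_h$, and is identified with the geodesic direction of $m_h$ after the above fibrewise rescaling — but I find the variational route more transparent.
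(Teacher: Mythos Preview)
The paper gives no proof of Theorem~\ref{theo1}: it is quoted as a known consequence of the Maupertuis principle, with a reference to \cite{Arnold}, and the text moves on immediately. So there is nothing to compare your argument against directly.

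Your variational outline is the standard one and is essentially correct, including the caveats you raise about matching the classes of admissible variations and the reparametrization. One small point of phrasing: when you write ``$C^1$ curves on $\Sigma_h$'' you really mean second-order curves $w=\dot x$ whose base curve $x$ satisfies the speed constraint $K(\dot x)=h-V(x)$; for a general curve $w(\cdot)$ on $\Sigma_h$ the integrand $\tq_w(\dot w)=\bb{w,Tp_M(\dot w)}$ is not $\bb{w,w}$, so the identification $\int\tq=\int 2K\,\rmd t$ only holds for that subclass. Once that is said, your chain $\int 2K\,\rmd t=\int\sqrt{2(h-V)}\,\rmd s=\text{length}_{m_h}$ is the heart of the Maupertuis--Jacobi argument.

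If you want to see what a proof in the style of \emph{this} paper would look like, read the proof of Theorem~\ref{theo2} and set $\vk\equiv 0$. There the author does not use any variational principle: he writes the geodesic equation of $m_h$ in local coordinates, expresses $\ov\Gamma^i_{jk}$ through $\Gamma^i_{jk}$ via the conformal relation $\ov a_{ij}=2(h-V)a_{ij}$, and then performs the explicit time change $dt=d\tau/\bigl(2(h-V)\bigr)$ to recover the Euler--Lagrange equations of the natural system together with the energy relation. That direct computation is closer to the ``purely Hamiltonian'' alternative you mention at the end than to your main variational route; it trades conceptual transparency for a self-contained argument that needs no statement of Maupertuis as an auxiliary principle.
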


\section{Gyroscopic forces} Along with natural systems, in mechanics we often come across the systems having forces that does not produce work. The existence of such forces, called gyroscopic, is usually expressed by the linear in velocities terms of Lagrangians. But these terms in the general case are defined only locally and up to adding some linear function generated by a closed 1-form. Therefore gyroscopic forces are naturally defined by some 2-form on the configuration space.

A \textbf{mechanical system with gyroscopic forces} is a 4-tuple
\begin{equation}\label{eq10}
  (M,m,V,\vk),
\end{equation}
where $M$ is a manifold, $m$ a Riemannian metric on $M$, $V$ a function on $M$ and $\vk$ a closed 2-form on $M$. All objects are supposed smooth.  Again $M$ is the configuration space, $V$ the potential. The function \eqref{eq02} is the kinetic energy and \eqref{eq03} the total energy of the system.

Let $\tq$ and $\sg$ be the Lagrangian forms generated by the metric $m$.
\begin{propos}\label{prop3}
There exists a unique vector field $X$ on $T(M)$ such that
\begin{equation}\label{eq11}
  \rmi_X(\sg+p_M^*\vk)=-\rmd(K+V\circ p_M).
\end{equation}
This field is a second-order equation.
\end{propos}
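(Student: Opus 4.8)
The plan is to follow closely the proof of Proposition~\ref{prop1}. First I would verify that $\sg+p_M^*\vk$ is again a symplectic form on $T(M)$. It is closed, since $\sg=\rmd\tq$ is exact while $\rmd(p_M^*\vk)=p_M^*(\rmd\vk)=0$ because $\vk$ is closed. For non-degeneracy I would work in natural coordinates $(\bq,\dot\bq)$ as in Proposition~\ref{prop1}: the form $p_M^*\vk=\vk_{ij}(\bq)\,\rmd q^i\wedge\rmd q^j$ involves only the differentials $\rmd q^i\wedge\rmd q^j$, hence it modifies only the upper-left block of the matrix \eqref{eq08}. Thus the matrix of $\sg+p_M^*\vk$ still has the shape $\begin{Vmatrix} * & -A \\ A & 0\end{Vmatrix}$ with the same off-diagonal blocks $\pm A$, so its determinant is again $(\det A)^2\ne 0$. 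Consequently $Y\mapsto\rmi_Y(\sg+p_M^*\vk)$ is a fibrewise isomorphism $T(T(M))\to T^*(T(M))$, and $X$ exists and is unique as the preimage of $-\rmd(K+V\circ p_M)$.

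It remains to show that this $X$ is a second-order equation. Let $X_0$ be the vector field of the natural system $(M,m,V)$ defined by \eqref{eq09}, which is known to be a second-order equation, i.e.\ $Tp_M\circ X_0=\id{T(M)}$. Put $Z=X-X_0$. Subtracting \eqref{eq09} from \eqref{eq11} yields $\rmi_Z(\sg+p_M^*\vk)=-\rmi_{X_0}(p_M^*\vk)$. The right-hand side is a \emph{semibasic} $1$-form on $T(M)$: for $w\in T(M)$ and $Y\in T_w(T(M))$ one has $(\rmi_{X_0}p_M^*\vk)_w(Y)=\vk_{p_M(w)}(Tp_M(X_0)_w,\,Tp_M Y)$, which vanishes whenever $Y$ is vertical, that is, whenever $Tp_M Y=0$.

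Finally I would show that a vector field whose interior product with $\sg+p_M^*\vk$ is semibasic must itself be vertical. Writing $Z=Z^i\,\partial/\partial q^i+\dot Z^i\,\partial/\partial\dot q^i$ in natural coordinates, the coefficient of $\rmd\dot q^i$ in $\rmi_Z(\sg+p_M^*\vk)$ equals (up to sign) $a_{ij}Z^j$, since $p_M^*\vk$ contributes no $\rmd\dot q$-terms and the only $\rmd\dot q$-terms of $\sg$ come from the block $A$ in \eqref{eq08}. As the right-hand side $-\rmi_{X_0}(p_M^*\vk)$ is semibasic, these coefficients all vanish, and invertibility of $A$ forces $Z^j\equiv 0$; hence $Z$ is vertical. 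Therefore $Tp_M\circ X=Tp_M\circ X_0+Tp_M\circ Z=\id{T(M)}$, so $X$ is a second-order equation. The only mildly delicate point is this last verticality argument (equivalently, a direct check that the added gyroscopic term does not affect the velocity part of the equation); the rest is a routine adaptation of Proposition~\ref{prop1}.
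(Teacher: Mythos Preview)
Your argument is correct and, for the existence/uniqueness part, is essentially identical to the paper's: both observe that $p_M^*\vk$ contains no $\rmd\dot q^i$-terms, so only the upper-left block of the matrix \eqref{eq08} is altered and the determinant remains $(\det A)^2\ne 0$. For the second assertion the paper says only that it ``is easily checked in natural coordinates,'' whereas you supply an actual argument by comparing $X$ with the field $X_0$ of the underlying natural system and showing that $Z=X-X_0$ is vertical because $\rmi_Z(\sg+p_M^*\vk)$ is semibasic. This is a valid and somewhat more conceptual route; the alternative implicit in the paper is simply to solve \eqref{eq11} in coordinates and read off directly that the $\partial/\partial q^i$-components of $X$ equal $\dot q^i$, which is shorter but less illuminating. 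Either way, the key observation is the one you isolate: the gyroscopic term contributes nothing to the $\rmd\dot q^i$-part of the equation, hence does not disturb the second-order character.
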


For the proof we note that the image of the form $\vk$ under the pull-back $p_M^*$ does not contain, in  local representation, forms of the type $\rmd \dot q^i$, therefore the matrix of the form $\sg+p_M^*\vk$ differs from \eqref{eq08} only in the left upper block and thus has a non-zero determinant. Hence, the form $\sg+p_M^*\vk$ is non-degenerate and the field $X$ exists and is unique. The second assertion is easily checked in natural coordinates on $T(M)$.

The vector field $X$ is the dynamical system corresponding to the mechanical system \eqref{eq10}. The same as above we call $x(t)=p_M\circ w(t)$ a motion in system \eqref{eq10} if $w(t)$ is an integral curve of the field $X$.

\begin{propos}\label{prop4}
The total energy \eqref{eq03} is a first integral of system \eqref{eq10}.
\end{propos}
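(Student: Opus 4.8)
The plan is to show that $H=K+V\circ p_M$ is constant along the flow of $X$ by computing the Lie derivative $X(H)=\rmd H(X)$ and showing it vanishes, exactly as in the natural case but paying attention to the extra term $p_M^*\vk$. First I would contract $X$ into the defining identity \eqref{eq11}: applying the $1$-form $\rmi_X(\sg+p_M^*\vk)$ to the vector $X$ itself gives
\[
  (\sg+p_M^*\vk)(X,X) = -\rmd H(X).
\]
The left-hand side is the value of an antisymmetric bilinear form on the pair $(X,X)$, hence is identically zero. Therefore $\rmd H(X)=0$, i.e. $X(H)=0$, which says precisely that $H$ is constant along every integral curve $w(t)$ of $X$; equivalently $H$ is a first integral of the system \eqref{eq10}.

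The only point that deserves a word of care is that $\sg+p_M^*\vk$ is antisymmetric: $\sg$ is a $2$-form by Proposition~\ref{prop1}, and $p_M^*\vk$ is the pull-back of the $2$-form $\vk$, so it is again a $2$-form; the sum of two alternating bilinear forms is alternating, and any alternating form annihilates the diagonal. I would also remark that this argument does not use closedness of $\vk$ or non-degeneracy of $\sg+p_M^*\vk$ beyond what Proposition~\ref{prop3} already guarantees for the existence of $X$; the conservation of energy is a purely formal consequence of \eqref{eq11} together with the skew-symmetry of the defining $2$-form, which is why the gyroscopic forces — encoded entirely in $\vk$ — do no work.

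There is essentially no obstacle here: the statement is the direct analogue of Proposition~\ref{prop2}, and the one-line computation above is complete. If one preferred a coordinate verification, one could instead write $X$ in natural coordinates $(\bq,\dot\bq)$, use the block form of the matrix of $\sg+p_M^*\vk$ noted after Proposition~\ref{prop3}, and check $\dot H=0$ along solutions directly; but the invariant argument is shorter and is the one I would present.
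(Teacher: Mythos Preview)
Your proof is correct and is precisely the unpacking of what the paper means when it says ``this fact immediately follows from the definition \eqref{eq11}'': contracting $X$ into the defining identity and using the skew-symmetry of the $2$-form $\sg+p_M^*\vk$ gives $\rmd H(X)=0$. There is nothing to add or change.
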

This fact immediately follows from the definition \eqref{eq11}.

Geometric interpretation of motions in a system with gyroscopic forces can be obtained in the following way.

Let $h\in \bR$ be the value of the energy integral
\begin{equation}\label{eq12}
  K+V\circ p_M=h
\end{equation}
such that the region of possible motions $M_h=\{x\in M: V(x)<h\}$ is not empty.
Define the Riemannian metric $m_h=2(h-V)m$ in $M_h$. Denote by $\Pi_h$ the operator taking 1-forms on $M_h$ to vector fields by the rule
\begin{equation}\label{eq13}
  m_h(\Pi_h(\lqq),Y)=\lqq(Y).
\end{equation}

\begin{theorem}\label{theo2}
Let $x(t)$ be a motion in system \eqref{eq10} satisfying the integral condition \eqref{eq12}. If we denote by $\ov{x}(\tau)$ the same curve but parameterized by the arclength $\tau$ of the metric $m_h$, then
\begin{equation}\label{eq14}
  \frac{D}{d\tau} \frac{d\ov{x}}{d\tau}=-\Pi_h\left(\rmi_{\frac{d\ov{x}}{d\tau}} \vk\right),
\end{equation}
where the covariant derivative is calculated in the metric $m_h$. Inversely, let $\ov{x}(\tau)$ be a curve parameterized by the arclength $\tau$ of the metric $m_h$ and satisfying \eqref{eq14}. Then there exists a change of the parameter $\tau=\tau(t)$ such that the curve $x(t)=\ov{x}(\tau(t))$ is a motion in system \eqref{eq10} on which the condition \eqref{eq12} holds.
\end{theorem}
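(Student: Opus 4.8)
The plan is to rewrite the defining relation \eqref{eq11} as a second-order \emph{covariant} equation on $M$, and then to read off \eqref{eq14} from the standard behaviour of the Levi--Civita connection under the conformal change $m\rightsquigarrow m_h=2(h-V)m$ and under a reparameterization of the curve. Throughout, let $\Pi$ denote the operator sending a $1$-form to a vector field by the rule \eqref{eq13} with $m$ in place of $m_h$, and write $D^{m}$, $D^{m_h}$ for the covariant differentiations along a curve in the metrics $m$ and $m_h$ respectively.

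First I would put \eqref{eq11} into Newton form. A local computation in natural coordinates on $T(M)$, of exactly the kind used for Proposition~\ref{prop3}, shows that if $w(t)$ is an integral curve of the field $X$ of Proposition~\ref{prop3} then $w(t)=\dot x(t)$ with $x(t)=p_M\circ w(t)$, and $x(t)$ satisfies
\[\frac{D^{m}}{dt}\dot x=-\Pi(\rmd V)-\Pi\!\left(\rmi_{\dot x}\vk\right).\]
For $\vk=0$ this is the Newton equation of the natural system $(M,m,V)$; the sign and normalization of the gyroscopic term are fixed by this requirement. By Proposition~\ref{prop4} the energy \eqref{eq12} is constant along such a motion; when it equals $h$ we have $m(\dot x,\dot x)=2(h-V)=:\ro>0$, hence the $m_h$-arclength $\tau$ obeys $d\tau/dt=\ro$, so $\ov{x}(\tau)$ has unit $m_h$-speed and $d\ov{x}/d\tau=\ro^{-1}\dot x$.

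Two elementary identities now do the work. The reparameterization rule gives $\dfrac{D^{m}}{d\tau}\dfrac{d\ov{x}}{d\tau}=\ro^{-2}\bigl(\dfrac{D^{m}}{dt}\dot x-\dot\ro\,\dfrac{d\ov{x}}{d\tau}\bigr)$. The conformal-change formula for the Levi--Civita connection, applied to the pair $U=W=d\ov{x}/d\tau$ and simplified by $m(d\ov{x}/d\tau,d\ov{x}/d\tau)=\ro^{-1}$ and $\Pi(\rmd\ro)=-2\,\Pi(\rmd V)$, gives $\dfrac{D^{m_h}}{d\tau}\dfrac{d\ov{x}}{d\tau}=\dfrac{D^{m}}{d\tau}\dfrac{d\ov{x}}{d\tau}+\ro^{-2}\dot\ro\,\dfrac{d\ov{x}}{d\tau}+\ro^{-2}\Pi(\rmd V)$. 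Adding these and substituting the Newton equation, the terms in $\dot\ro\,d\ov{x}/d\tau$ cancel and the two terms in $\Pi(\rmd V)$ cancel — this last cancellation is precisely the content of Theorem~\ref{theo1}, which may be invoked instead of recomputing — leaving $\dfrac{D^{m_h}}{d\tau}\dfrac{d\ov{x}}{d\tau}=-\ro^{-2}\,\Pi\!\left(\rmi_{\dot x}\vk\right)$. Finally, from $m_h=\ro\,m$ one has $\Pi_h=\ro^{-1}\Pi$, and $\rmi_{\dot x}\vk=\ro\,\rmi_{d\ov{x}/d\tau}\vk$, so $\ro^{-2}\,\Pi(\rmi_{\dot x}\vk)=\Pi_h(\rmi_{d\ov{x}/d\tau}\vk)$, which is \eqref{eq14}.

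For the converse, given $\ov{x}(\tau)$ of unit $m_h$-speed lying in $M_h$ and satisfying \eqref{eq14}, I would define $t(\tau)$ by $dt/d\tau=(2(h-V(\ov{x}(\tau))))^{-1}$, which is positive and hence strictly monotone; letting $\tau(t)$ be its inverse and putting $x(t)=\ov{x}(\tau(t))$, one has $m(\dot x,\dot x)=2(h-V(x))$ by construction, i.e. \eqref{eq12} holds, and running the computation of the previous paragraph backwards shows that $x(t)$ satisfies the covariant Newton equation above; since $X$ is a second-order equation this is equivalent to $\dot x(t)$ being an integral curve of $X$, i.e. to $x(t)$ being a motion in \eqref{eq10}. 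I do not expect a genuine obstacle: the only substantive steps are the passage from \eqref{eq11} to the covariant Newton equation (routine in natural coordinates) and the bookkeeping of the two connection identities, and the main pitfall — keeping straight the signs and the scalar factors $\ro,\ro^{-1},\ro^{-2}$ — is controlled by the already-known case $\vk=0$ of Theorem~\ref{theo1}.
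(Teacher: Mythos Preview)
Your argument is correct; the main steps --- passing to the covariant Newton equation, relating the two Levi--Civita connections, and reparameterizing --- match the paper's proof, and your bookkeeping of the factors $\rho,\rho^{-1},\rho^{-2}$ checks out. The difference is one of packaging rather than of mathematical content. The paper proves the converse first and works entirely in local coordinates: it writes \eqref{eq14} as the explicit Christoffel-symbol equation \eqref{eq15}, substitutes the relation $\ov a_{ij}=2(h-V)a_{ij}$ between the two metric tensors to obtain \eqref{eq17}, and then applies the time change $dt=d\tau/\bigl(2(h-V)\bigr)$ to reach the coordinate form of the motion equations. You instead invoke the standard conformal-change formula for the Levi--Civita connection, which is exactly the invariant statement that the coordinate identity \eqref{eq17} encodes; the paper even remarks in the Introduction that its coordinate computations could be replaced by such invariant reasoning. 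Your route is cleaner and makes the cancellation mechanism (the $\dot\rho\,d\ov x/d\tau$ terms and the two $\Pi(\rmd V)$ terms) visible, at the cost of assuming the conformal-connection formula as a black box; the paper's route is self-contained but heavier in notation.
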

\begin{proof}
Let us start with the second assertion. It obviously has local character, therefore we use some coordinates $\bq=(q^1,\ldots,q^m)$ on $M$. Let $a_{ij},\Gamma^i_{jk}$ and $\ov{a}_{ij},\ov{\Gamma}^i_{jk}$ be the metric tensor and the Christoffel symbols of $m$ and $m_h$ respectively. Let $\vk=\vk_{ij}\rmd q^i \wedge \rmd q^j$. Denote $\ov{\vk}_{ij}=\vk_{ij}-\vk_{ji}$.

Suppose that the curve $\ov{x}(\tau)=(q^1(\tau),\ldots,q^m(\tau))$ satisfies \eqref{eq14}. In local representation
\begin{equation}\label{eq15}
  \frac{d^2q^i}{d\tau^2}+\ov{\Gamma}^i_{jk}\frac{d q^j}{d\tau}
  \frac{d q^k}{d\tau}=\ov{a}^{ik}\ov{\vk}_{kj}\frac{d q^j}{d\tau},
\end{equation}
where $\ov{a}^{ij}\ov{a}_{jk}=\delta^i_k$. By definition, the following relations hold
\begin{equation}\label{eq16}
  \ov{a}_{ij}=2(h-V)a_{ij}, \qquad a^{ij}=2(h-V)\ov{a}^{ij}.
\end{equation}
Substituting \eqref{eq16} into \eqref{eq15} we obtain
\begin{equation}\label{eq17}
  \begin{array}{l}
\ds     \frac{d^2q^i}{d\tau^2}+\Gamma^i_{jk}
     \frac{d q^j}{d\tau}
  \frac{d q^k}{d\tau}+\frac{a^{i\ell}}{2(h-V)}\left[ a_{\ell j}\frac{\partial (h-V)}{\partial q^k}+a_{\ell k}\frac{\partial (h-V)}{\partial q^j} - \right.\\
\ds \qquad \left.  - a_{j k}\frac{\partial (h-V)}{\partial q^\ell}\right]\frac{d q^j}{d\tau}
  \frac{d q^k}{d\tau}=\frac{a^{ik}}{2(h-V)} \ov{\vk}_{kj}\frac{d q^j}{d\tau}.
  \end{array}
\end{equation}

By assumption $\ov{x}(\tau)\in M_h$, then $h-V(\ov{x}(\tau))>0$ and we can make the following monotonous change of the parameter
\begin{equation}\label{eq18}
\ds  dt=\frac{d\tau}{2(h-V(\ov{x}(\tau)))}.
\end{equation}
Since $\tau$ is the natural parameter on $\ov{x}$, we have
\begin{equation}\label{eq19}
  2(h-V)a_{ij}\frac{d q^i}{d\tau}
  \frac{d q^j}{d\tau}=1.
\end{equation}
Then applying the change \eqref{eq18} to \eqref{eq17} we obtain the equation
\begin{equation*}
  \frac{d^2q^i}{dt^2}+\Gamma^i_{jk}\frac{d q^j}{d t}
  \frac{d q^k}{dt}+a^{ij}\frac{\partial V}{\partial q^j}=a^{ik}\ov{\vk}_{kj}\frac{d q^j}{dt},
\end{equation*}
which is a local representation of the fact that $x(t)=\ov{x}(\tau(t))$ is a solution of the second-order equation $X$ defined according to \eqref{eq11}. The conservation law \eqref{eq12} holds due to the choice of the change \eqref{eq18} and the condition \eqref{eq19}.

The proof of the first assertion can be obtained by making all substitutions in reversed order. The variational proof for the case of 2-dimensional $M$ can be found in~\cite{AnSin}.
\end{proof}

The flows on iso-energetic manifolds defined by \eqref{eq14} can naturally be called the \textbf{flows of given curvature}. If $\vk \equiv 0$, we obtain usual geodesic flows.

The case when $\dim M=2$ is essentially special because in this case the geodesic curvature of trajectories depends only on the point of $M$ rather than on the direction of trajectories. Namely, let $o_h$ be the volume form on $M_h$ corresponding to the metric $m_h$. Then there exists a function $k_h$ on $M_h$ such that $\vk=k_h o_h$. A simple calculation shows that for each vector $v$ tangent to $M_h$ at some point $x$ and having length 1 in the metric $m_h$, the vector $w=-\Pi_h(\rmi_v \vk)$ is orthogonal to $v$ in the metric $m_h$ and its length is $\|w\|_{m_h}=|k_h(x)|$. According to \eqref{eq13},
\begin{equation*}
  \begin{array}{rcl}
     \vk(w,v)& = &\vk(v,\Pi_h(\rmi_v \vk))=(\rmi_v \vk)(\Pi_h(\rmi_v \vk))= \\
     {}&  = & m_h(\Pi_h(\rmi_v \vk), \Pi_h(\rmi_v \vk))=k_h^2>0.
   \end{array}
\end{equation*}
This means that the basis $\{w,v\}$ in $T_x(M)$ defines in $T_x(M)$ the same orientation as the 2-form $\vk$. Therefore, for 2-dimensional systems we proved the following statement.
\begin{propos}\label{prop5}
Let in \eqref{eq10} $\dim M=2$. A curve $x(t)$ satisfying \eqref{eq12} is the motion in system \eqref{eq10} if and only if, being parameterized by the arclength of the metric $m_h$, it has the geodesic curvature $|\vk/o_h|$ and the basis $\{$\emph{the curvature vector, the tangent vector}$\}$ gives the same orientation of $T_{x(t)}(M)$ as the form $\vk$.
\end{propos}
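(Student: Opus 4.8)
\section*{Proof proposal}

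The plan is to read off the statement as a direct consequence of Theorem~\ref{theo2} combined with the pointwise computation displayed just before the proposition, after observing that in dimension two the data ``geodesic curvature together with orientation'' is equivalent to the full covariant acceleration vector. First I would recall that, by Theorem~\ref{theo2}, a curve $x(t)$ satisfying \eqref{eq12} is a motion in \eqref{eq10} precisely when its reparameterization $\ov{x}(\tau)$ by the arclength of $m_h$ obeys \eqref{eq14}. Writing $v=d\ov{x}/d\tau$ for the unit tangent and $w=-\Pi_h(\rmi_v\vk)$, equation \eqref{eq14} is simply $\tfrac{D}{d\tau}v=w$, so the whole proposition amounts to re-expressing this identity in terms of geodesic curvature and orientation.

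Next I would use that for an $m_h$-unit-speed curve the covariant acceleration $\tfrac{D}{d\tau}v$ is $m_h$-orthogonal to $v$ (differentiate $\bb{v,v}_{m_h}=1$) and that its $m_h$-norm is, by definition, the geodesic curvature of $\ov{x}$. Since $\dim M=2$, the $m_h$-orthogonal complement of $v$ in $T_x(M)$ is one-dimensional, so a vector orthogonal to $v$ is completely determined by its norm together with the choice of one of the two unit directions normal to $v$ — equivalently, together with the orientation of the pair $\{\,\text{that vector},\,v\,\}$. Applying this to $w$: the calculation preceding the proposition shows $w\perp v$ in $m_h$, that $\|w\|_{m_h}=|k_h(x)|=|\vk/o_h|(x)$, and that $\vk(w,v)=k_h^2>0$, so $\{w,v\}$ is positively oriented for $\vk$. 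Hence $\tfrac{D}{d\tau}v=w$ holds if and only if $\tfrac{D}{d\tau}v$ is orthogonal to $v$ with norm $|\vk/o_h|$ and $\{\tfrac{D}{d\tau}v,\,v\}$ is $\vk$-positively oriented, which is exactly the condition in the statement. Together with the ``if and only if'' of Theorem~\ref{theo2}, this gives both implications at once (the converse direction of Theorem~\ref{theo2} supplying the reparameterization back to a motion with energy $h$).

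I do not expect a serious obstacle here; the proposition is essentially a repackaging of earlier results. The only points requiring care are the ``uniqueness'' remark — that in a two-dimensional space the norm and the orientation datum pin down the normal vector uniquely — and the bookkeeping identifying the function $k_h$ in $\vk=k_h o_h$ with the ratio $\vk/o_h$ and verifying $\|w\|_{m_h}=|k_h|$; both follow immediately from \eqref{eq13} and the definition of $o_h$, and in fact the relevant computation has already been displayed above.
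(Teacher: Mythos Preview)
Your proposal is correct and follows essentially the same route as the paper: the proof there is the paragraph immediately preceding the proposition, which computes that $w=-\Pi_h(\rmi_v\vk)$ is $m_h$-orthogonal to $v$, has $m_h$-length $|k_h|$, and satisfies $\vk(w,v)=k_h^2>0$, and then declares the proposition proved. You have simply made explicit the two connective steps the paper leaves tacit --- that \eqref{eq14} reads $\tfrac{D}{d\tau}v=w$ via Theorem~\ref{theo2}, and that in dimension two an $m_h$-normal vector is pinned down by its length and the orientation of $\{\text{vector},v\}$ --- so the arguments coincide.
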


\section{Invariant theory of reduction\\ in systems with symmetry}
Let us suppose that a one-parameter group $G=\{g^\tau\}$ acts as diffeomorphisms of the configurational space of the natural mechanical system \eqref{eq01} and this action generates a principal $G$-bundle \cite{Bishop}
\begin{equation}\label{eq20}
  \mB=(M,p,\tl{M}),
\end{equation}
where $\tl{M}=M/G$ is the quotient manifold and $p:M \to \tl{M}$ the factorization map. Suppose also that all $g^\tau$ preserve the metric $m$ and the potential $V$. Obviously, diffeomorphisms from the group $G_T=\{Tg^\tau: g^\tau \in G\}$ preserve the kinetic energy \eqref{eq02} of \eqref{eq01} and, consequently, the total energy \eqref{eq03}. For the generating vector fields
\begin{eqnarray}
  v(x) &=& {\left.\frac{d}{d\tau}\right|}_{\tau=0}g^\tau(x), \qquad x\in M, \label{eq21}\\
  v_T(w) &=& {\left.\frac{d}{d\tau}\right|}_{\tau=0}Tg^\tau(w), \qquad w\in T(M) \label{eq22}
\end{eqnarray}
we have
\begin{equation}\label{eq23}
  vV\equiv 0, \qquad v_T K\equiv 0, \qquad v_T H\equiv 0.
\end{equation}
The group $G$ satisfying \eqref{eq23} is called the symmetry group of system \eqref{eq01}. The theory of natural systems with symmetries was created by S.\,Smale \cite{Smale}. The starting point of it is the momentum integral. For a one-parameter group let us use a simpler definition of the momentum \cite{Ta1973} connected with Noether's theorem \cite{Arnold}.

The momentum of a mechanical system \eqref{eq01} with symmetry $G$ is the function $J$ on $T(M)$ defined as
\begin{equation}\label{eq24}
  J(w)=\bb{v,w},
\end{equation}
where $v$ is the vector field \eqref{eq21}. It is clear that $J$ is everywhere regular and $G_T$-invariant. Let us show that it is a first integral of system \eqref{eq01}.
\begin{lemma}\label{lem1}
The Lagrange forms $\tq$ and $\sg$ generated by the metric $m$ are preserved by the group $G_T$.
\end{lemma}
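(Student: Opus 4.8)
The plan is to reduce the lemma to the single claim that each $Tg^\tau$ preserves the $1$-form $\tq$. Indeed, once $(Tg^\tau)^*\tq=\tq$ is established, the statement for $\sg$ follows immediately, since pull-back commutes with the exterior derivative: $(Tg^\tau)^*\sg=(Tg^\tau)^*\rmd\tq=\rmd\big((Tg^\tau)^*\tq\big)=\rmd\tq=\sg$. So the whole content of the lemma lies in the invariance of $\tq$.

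To prove $(Tg^\tau)^*\tq=\tq$ I would argue invariantly, exploiting the naturality of the tangent-bundle projection. For any diffeomorphism $g\colon M\to M$ one has $p_M\circ Tg=g\circ p_M$; applying the tangent functor to this identity gives $Tp_M\circ T(Tg)=Tg\circ Tp_M$ as maps $T(T(M))\to T(M)$. Taking $g=g^\tau$, a vector $w\in T(M)$ lying over $x=p_M(w)$, and $X\in T_w(T(M))$, I unwind the definition \eqref{eq04}:
\begin{equation*}
\big((Tg^\tau)^*\tq\big)_w(X)=\tq_{Tg^\tau(w)}\big(T(Tg^\tau)(X)\big)=\bb{Tg^\tau(w),\,Tg^\tau\big(Tp_M(X)\big)},
\end{equation*}
where in the last step I substituted $Tp_M\circ T(Tg^\tau)=Tg^\tau\circ Tp_M$. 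Now both $w$ and $Tp_M(X)$ lie in the same fibre $T_x(M)$, and since $g^\tau$ preserves $m$, its differential $Tg^\tau$ restricts to a linear isometry of $T_x(M)$ onto $T_{g^\tau(x)}(M)$; hence the last scalar product equals $\bb{w,\,Tp_M(X)}=\tq_w(X)$. As $w$ and $X$ were arbitrary, $(Tg^\tau)^*\tq=\tq$.

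The argument is almost entirely formal: the only inputs are functoriality of $T(\cdot)$, which supplies the differentiated commuting square, and the standing hypothesis that each $g^\tau$ is an isometry of $m$, which lets the two copies of $Tg^\tau$ cancel. The one spot that needs care is distinguishing the two occurrences of ``$Tg^\tau$'' in $Tp_M\circ T(Tg^\tau)=Tg^\tau\circ Tp_M$: on the right it is the ordinary differential of $g^\tau$ acting on the fibre $T_x(M)$, on the left a second-order object on $T(T(M))$. Conflating these is the natural pitfall, but nothing deeper is hidden. If one preferred a coordinate proof, the same fact drops out of the explicit expression \eqref{eq07} together with the tensorial transformation law of $\|a_{ij}\|$ under the coordinate change induced by $g^\tau$, which is exactly the isometry condition; I would nevertheless present the coordinate-free version above.
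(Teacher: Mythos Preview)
Your proof is correct and follows essentially the same route as the paper: compute $((Tg^\tau)^*\tq)_w(X)$ via the definition \eqref{eq04}, use the commuting square $Tp_M\circ TTg^\tau=Tg^\tau\circ Tp_M$, invoke the isometry hypothesis to cancel the two copies of $Tg^\tau$, and then pass to $\sg$ by commuting pull-back with $\rmd$. The paper's proof is terser but step-for-step identical; your additional remarks on the two levels of $Tg^\tau$ are expository only.
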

\begin{proof}
For all $w\in T(M),Y\in T_w(T(M))$ we have
\begin{equation}\label{eq25}
\begin{array}{l}
  \tq_{Tg^\tau(w)}(TTg^\tau(Y))=\bb{Tg^\tau(w),Tp_M\circ TTg^\tau(Y)}=\\
  \qquad =\bb{Tg^\tau(w),Tg^\tau\circ Tp_M(Y)}=\bb{w,Tp_M(Y)}.
\end{array}
\end{equation}
The last equality follows from the fact that $g^\tau$ are isometries of $m$. Equations \eqref{eq04} and \eqref{eq25} yield $(Tg^\tau)^*\tq=\tq$, hence, according to \eqref{eq06}, $(Tg^\tau)^*\sg=\sg$. \end{proof}

\begin{corol}\label{thecor1} The field $X$ defining dynamics of system \eqref{eq01} is preserved by the group $G_T$, i.e., for all $g^\tau\in G$
\begin{equation*}
  TTg^\tau\circ X=X\circ Tg^\tau.
\end{equation*}
The generating field \eqref{eq22} commutes with $X$:
\begin{equation}\label{eq26}
  [v_T,X]\equiv 0.
\end{equation}
\end{corol}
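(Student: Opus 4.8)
The plan is to lean entirely on the uniqueness of the field $X$ that comes from the non-degeneracy of $\sg$ established in Proposition~\ref{prop1}: since $X$ is the one and only vector field on $T(M)$ solving $\rmi_X\sg=-\rmd H$, it will suffice to verify that, for each fixed $\tau$, the transported field $(Tg^\tau)_*X:=TTg^\tau\circ X\circ(Tg^\tau)^{-1}$ satisfies this very same equation. Once that is done, the first assertion $TTg^\tau\circ X=X\circ Tg^\tau$ is immediate, and the bracket relation \eqref{eq26} will be obtained by differentiating in $\tau$.

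To carry out the verification I would fix $\tau$, write $\phi=Tg^\tau$ for brevity, and invoke the naturality of the interior product under a diffeomorphism, $\phi_*(\rmi_Z\alpha)=\rmi_{\phi_*Z}(\phi_*\alpha)$, with $Z=X$ and $\alpha=\sg$. Lemma~\ref{lem1}, applied also to the parameter value $-\tau$ so that it covers pushforwards, gives $\phi_*\sg=\sg$, hence $\phi_*(\rmi_X\sg)=\rmi_{\phi_*X}\sg$. On the other hand, since the diffeomorphisms of $G_T$ preserve $K$ and $V\circ p_M$, they preserve the total energy $H$, so $\phi_*(-\rmd H)=-\rmd(H\circ\phi^{-1})=-\rmd H$. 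Comparing the two expressions for $\phi_*(\rmi_X\sg)=\phi_*(-\rmd H)$ and using the non-degeneracy of $\sg$ forces $\phi_*X=X$, which is exactly $TTg^\tau\circ X=X\circ Tg^\tau$.

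For \eqref{eq26} I would observe that the maps $\{Tg^\tau\}$ form a one-parameter group of diffeomorphisms of $T(M)$ whose infinitesimal generator, by \eqref{eq22}, is $v_T$; thus $\{Tg^\tau\}$ is precisely the flow of $v_T$, and $[v_T,X]=\mathcal{L}_{v_T}X=\left.\tfrac{d}{d\tau}\right|_{\tau=0}(Tg^{-\tau})_*X$, which vanishes by the first part of the proof. Should one prefer to avoid any sign convention for Lie derivatives of vector fields, the same conclusion follows from the identity $\rmi_{[v_T,X]}\sg=\mathcal{L}_{v_T}(\rmi_X\sg)-\rmi_X(\mathcal{L}_{v_T}\sg)$: here $\mathcal{L}_{v_T}\sg=0$ by differentiating Lemma~\ref{lem1}, and $\mathcal{L}_{v_T}(\rmi_X\sg)=-\rmd(v_TH)=0$ by \eqref{eq23}, so $\rmi_{[v_T,X]}\sg=0$ and non-degeneracy gives $[v_T,X]\equiv0$.

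I do not expect a genuine obstacle: this is the standard uniqueness argument for Hamiltonian vector fields transported by symmetries. The only point that needs a little care is the bookkeeping of variances — pushforward versus pullback — in the naturality identity and in the invariance statements of Lemma~\ref{lem1} and of $H$; once those are aligned, the rest is routine.
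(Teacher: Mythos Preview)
Your argument is correct and is precisely the approach the paper intends: the paper's own proof is the single sentence ``follows immediately from the invariance of $H$ and definition \eqref{eq09}'', which is exactly your uniqueness argument for the Hamiltonian field, with Lemma~\ref{lem1} supplying the invariance of $\sg$. Your write-up simply unpacks what the paper leaves implicit, and your alternative derivation of \eqref{eq26} via $\rmi_{[v_T,X]}\sg=\mathcal{L}_{v_T}(\rmi_X\sg)-\rmi_X(\mathcal{L}_{v_T}\sg)$ is a clean addition not present in the paper.
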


The proof follows immediately from the invariance of $H$ and definition \eqref{eq09}.

Now let us note that the fields \eqref{eq21} and \eqref{eq22} satisfy
\begin{equation}\label{eq27}
  Tp_M\circ v_T=v.
\end{equation}
Therefore, using definition \eqref{eq04}, we can calculate the derivative of the momentum along $X$ as $XJ=X\tq(v_T)$. Let us add to the right-hand part the terms $\tq([v_T,X])$ and $-v_T\tq(X)=-2 v_T K$ equal to zero in virtue of \eqref{eq23}, \eqref{eq26} and use the rule for the exterior derivative of a 1-form \cite{Godbil}\footnote[1]{$\rmd \alpha (U,V)=\alpha([U,V])+U \alpha(V)-V\alpha(U)$.}. Then we obtain
\begin{equation*}
  XJ=\rmd \tq(X,v_T)=-\rmd H(v_T)=-v_T H \equiv 0.
\end{equation*}
Here we used \eqref{eq23} for $v_T H$ and definition \eqref{eq09}. Thus, the momentum $J$ is a first integral of the field $X$. In particular, for any $k\in \bR$ the set $J_k=J^{-1}(k)$ is a $G_T$-invariant integral submanifold in $T(M)$ of codimension 1.

It is clear that $J_k(x)=J_k \cap T_x(M)$ is a hyperplane in $T_x(M)$ and it contains zero if and only if $k=0$. The subspace $J_0(x)$ is the orthogonal supplement, in metric $m$, to the line $T_x^v$ spanned by the generating vector \eqref{eq21}. The hyperplane $J_k(x)$ is parallel to $J_0(x)$ and therefore the intersection $T_x^v \cap J_k(x)$ consists of a unique vector
\begin{equation}\label{eq28}
  v^k(x)=k v/\bb{v,v}.
\end{equation}
The vector field $v^k$ is smooth and $G_T$-invariant.

\def\U{\Gamma}

The set of subspaces $J_0(x)$ generates a connexion \cite{Bishop} in the principal $G$-bundle \eqref{eq20}. Let $\mh$ be the form of the connexion $J_0$,
\begin{equation}\label{eq29}
  \mh(w)=\bb{v,w}/\bb{v,v},
\end{equation}
and $\mg$ the corresponding curvature form,
\begin{equation}\label{eq30}
  \mg =\rmd \mh
\end{equation}
(here we have the standard exterior derivative since $G$ is commutative). Let $\U_k: J_0 \to J_k$ be the diffeomorphism defined by
\begin{equation}\label{eq31}
  \U_k(w)=w+v^k(x), \qquad w\in J_0(x).
\end{equation}
Denote by $\tq^k$ and $\sg^k$ the differential forms induced on $J_k$ by the Lagrange forms of the metric $m$ under the embedding $J_k \subset T(M)$.

\begin{propos}\label{prop6}
The following equalities hold
\begin{eqnarray}
  \U_k^*\tq^k &=& \tq^0+k\, p_M^*\mh, \label{eq32}\\
  \U_k^*\sg^k &=& \sg^0+k\, p_M^*\mg. \label{eq33}
\end{eqnarray}
\end{propos}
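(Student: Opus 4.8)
The plan is to prove \eqref{eq32} by a direct pointwise computation from the definitions, and then obtain \eqref{eq33} from \eqref{eq32} simply by applying the exterior derivative.

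First, the reduction of \eqref{eq33} to \eqref{eq32}. Write $\iota_k\colon J_k\hookrightarrow T(M)$ for the inclusion, so that by the construction of the induced forms $\tq^k=\iota_k^*\tq$ and $\sg^k=\iota_k^*\sg=\iota_k^*\rmd\tq=\rmd(\iota_k^*\tq)=\rmd\tq^k$. Hence $\U_k^*\sg^k=\U_k^*\rmd\tq^k=\rmd(\U_k^*\tq^k)$. On the other hand, using $\sg^0=\rmd\tq^0$, that $k$ is constant, and the definition \eqref{eq30} of the curvature form (which, as noted there, is an honest exterior derivative because $G$ is commutative), we get $\rmd(\tq^0+k\,p_M^*\mh)=\rmd\tq^0+k\,p_M^*(\rmd\mh)=\sg^0+k\,p_M^*\mg$. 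Thus \eqref{eq33} is precisely the exterior derivative of \eqref{eq32}, and it suffices to prove \eqref{eq32}.

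Now fix $w\in J_0(x)$ (so $x=p_M(w)$) and a tangent vector $X\in T_w(J_0)$. By definition of the pulled-back form, $(\U_k^*\tq^k)_w(X)=\tq_{\U_k(w)}\bigl(T(\iota_k\circ\U_k)(X)\bigr)$, and by \eqref{eq04} this equals $\bb{\U_k(w),\,Tp_M\circ T(\iota_k\circ\U_k)(X)}$. The key observation is that the map $\iota_k\circ\U_k\colon J_0\to T(M)$, $w'\mapsto w'+v^k(p_M(w'))$, is fiber preserving over $M$: since $v^k(x')\in T_{x'}(M)$ by \eqref{eq28}, adding $v^k$ does not change the base point, so $p_M\circ\iota_k\circ\U_k=p_M|_{J_0}=p_M\circ\iota_0$. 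Therefore $Tp_M\circ T(\iota_k\circ\U_k)=Tp_M\circ T\iota_0$, and so
\[
(\U_k^*\tq^k)_w(X)=\bb{w+v^k(x),\ Tp_M\circ T\iota_0(X)}=\bb{w,\ Tp_M\circ T\iota_0(X)}+\bb{v^k(x),\ Tp_M\circ T\iota_0(X)}.
\]
The first summand is exactly $\tq^0_w(X)=(\iota_0^*\tq)_w(X)$. For the second, \eqref{eq28} gives $v^k(x)=k\,v(x)/\bb{v,v}$, so $\bb{v^k(x),Tp_M\circ T\iota_0(X)}=k\,\bb{v(x),Tp_M\circ T\iota_0(X)}/\bb{v,v}=k\,\mh_x\bigl(Tp_M\circ T\iota_0(X)\bigr)$ by the definition \eqref{eq29} of the connexion form; and $\mh_x(Tp_M\circ T\iota_0(X))=(p_M^*\mh)_w(X)$. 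Adding the two pieces yields $(\U_k^*\tq^k)_w(X)=\tq^0_w(X)+k\,(p_M^*\mh)_w(X)$, which is \eqref{eq32}.

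There is essentially no obstacle beyond careful bookkeeping with tangent maps; the one substantive point is the fiber-preserving identity $p_M\circ\iota_k\circ\U_k=p_M|_{J_0}$, which makes the base projection blind to the affine translation by $v^k$ and lets \eqref{eq04} split the pairing into the two clean terms. It is also worth recording at the outset why $\U_k$ is a well-defined diffeomorphism $J_0\to J_k$: the hyperplanes $J_k(x)$ are the affine translates of $J_0(x)$ by the unique vector $v^k(x)$ of $T_x^v\cap J_k(x)$, and $v^k$ is smooth, so \eqref{eq31} is a smooth bijection with smooth inverse $w\mapsto w-v^k(x)$.
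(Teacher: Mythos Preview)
Your proof is correct and follows essentially the same route as the paper: a direct pointwise computation of $(\U_k^*\tq^k)_w(X)$ using the fiber-preserving identity $p_M\circ\U_k=p_M$ to split the pairing into $\tq^0$ and $k\,p_M^*\mh$, with \eqref{eq33} then obtained by applying the exterior derivative. The only cosmetic differences are that you make the inclusions $\iota_k$ explicit and deduce \eqref{eq33} first rather than last.
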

\begin{proof}
Let $w\in J_0, Y\in T_w(J_0)$. From \eqref{eq04},
\begin{equation}\label{eq34}
  \tq^0_w(Y)=\bb {w,Tp_M(Y)}.
\end{equation}
On the other hand,
\begin{equation*}
\begin{array}{l}
  (  \U_k^*\tq^k)_w(Y)=\tq^k_{\U_k(w)}(T\U_k(Y))= \bb{\U_k(w),Tp_M\circ T\U_k(Y)} =\\
  \qquad = \bb{w+v^k,T(p_M\circ \U_k)(Y)  }=\bb{w,Tp_M(Y)}+\bb{v^k,Tp_M(Y)}.
\end{array}
\end{equation*}
Here we used the identity $p_M\circ \U_k = p_M$ from \eqref{eq31}. From \eqref{eq28} and \eqref{eq29} we have
\begin{equation*}
  \bb{v^k,Tp_M(Y)}=k\, \mh(Tp_M(Y)),
\end{equation*}
therefore,
\begin{equation}\label{eq35}
  (  \U_k^*\tq^k)_w(Y)= \bb{w,Tp_M(Y)}+k \,p_M^*\mh (Y).
\end{equation}
Comparing \eqref{eq34} with \eqref{eq35}, we obtain \eqref{eq32}. Now \eqref{eq33} follows from \eqref{eq06} and \eqref{eq32} since the exterior derivative commutes with pull-back mappings of forms.
\end{proof}

Let us introduce the map
\begin{equation*}
  \ro_k:J_k \to T(\tl{M})
\end{equation*}
as the restriction to $J_k$ of the map $Tp:T(M)\to T(\tl{M})$. Using an atlas of the bundle $\mB$ one can show that the triple
\begin{equation}\label{eq36}
  \mB_k=(J_k,\ro_k,T(\tl{M}))
\end{equation}
is a principle $G_T$-bundle.
\begin{theorem}\label{theo3}
The forms $\tq^k$ and $\sg^k$ are preserved by the group $G_T$. The form $\sg^k$ is horizontal in the sense of the bundle \eqref{eq36}. The form $\tq^k$ is horizontal if and only if $k=0$.
\end{theorem}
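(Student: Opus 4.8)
The plan is to establish the three assertions in order, exploiting that everything in sight is built canonically from the metric $m$ and the $G$-action, so invariance statements should follow from Lemma~\ref{lem1}, while horizontality is a computation with the tangent map of $\ro_k$. First I would note that $\tq^k$ and $\sg^k$ are, by definition, the pullbacks of $\tq$ and $\sg$ under the inclusion $\iota_k: J_k \hookrightarrow T(M)$. Since $J_k$ is $G_T$-invariant (shown already: $J$ is $G_T$-invariant), the action of $G_T$ restricts to $J_k$ and commutes with $\iota_k$. Hence $(Tg^\tau|_{J_k})^*\tq^k = (Tg^\tau|_{J_k})^*\iota_k^*\tq = \iota_k^*(Tg^\tau)^*\tq = \iota_k^*\tq = \tq^k$ by Lemma~\ref{lem1}, and likewise for $\sg^k$ using $(Tg^\tau)^*\sg=\sg$. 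This disposes of the first assertion.

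For horizontality of $\sg^k$ in the bundle $\mB_k=(J_k,\ro_k,T(\tl M))$, I would recall that a form is horizontal iff its interior product with every vertical vector vanishes, and that the vertical distribution of $\mB_k$ is spanned pointwise by the generating field $v_T$ restricted to $J_k$ (this is the fundamental vector field of the $G_T$-action on $J_k$, whose orbits are exactly the fibers of $\ro_k = Tp|_{J_k}$, since $Tp\circ Tg^\tau = Tp$). So I must show $\rmi_{v_T}\sg^k = 0$ along $J_k$. For any $Y\in T_w(J_k)$, $\sg^k_w(v_T,Y) = \sg_w(v_T,Y) = \rmd\tq(v_T,Y)$; using the identity $\rmd\tq(U,V)=\tq([U,V])+U\tq(V)-V\tq(U)$ together with $\tq(v_T) = \bb{v,Tp_M\circ v_T}\circ p_M = \bb{v,v}\circ p_M$ (from \eqref{eq04} and \eqref{eq27}), one reduces everything to derivatives of the function $\bb{v,v}\circ p_M$ and to $\tq([v_T,Y])$; because $[v_T,X]=0$ is known but $Y$ is arbitrary, I would instead use that $\sg$ is $G_T$-invariant and hence $L_{v_T}\sg = 0$, so $\rmi_{v_T}\sg = \rmi_{v_T}\rmd\tq = L_{v_T}\tq - \rmd\,\rmi_{v_T}\tq = -\rmd(\bb{v,v}\circ p_M)$, which is a $p_M$-basic form. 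Restricting to $J_k$: for $Y$ tangent to a fiber of $\ro_k$, $Tp_M(Y)$ is tangent to a fiber of $p$, i.e. vertical in $\mB$, and since $g^\tau$ preserve $m$ the function $\bb{v,v}$ is $G$-invariant, so $\rmd(\bb{v,v})$ annihilates vertical vectors of $\mB$; therefore $\rmi_{v_T}\sg^k$ kills all vertical vectors of $\mB_k$, i.e. $\sg^k$ is horizontal.

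For $\tq^k$, the same computation gives $\rmi_{v_T}\tq^k = \bb{v,v}\circ p_M$ restricted to $J_k$, which is a nowhere-vanishing function (regularity of $v$), never zero — so $\tq^k$ is \emph{not} horizontal for any $k$, whatever $k$ is. This seems to contradict the claimed "$\tq^k$ is horizontal iff $k=0$," so the intended notion of horizontality must be the one matching \eqref{eq32}: namely, I expect the theorem really compares $\U_k^*\tq^k$ on $J_0$, and calls $\tq^k$ horizontal when the extra term $k\,p_M^*\mh$ in \eqref{eq32} vanishes as a form \emph{pulled back from} $\tl M$ — equivalently, when the horizontal part (in the connection $J_0$, i.e. modulo the $\mh$-component) of $\tq^k$ agrees with $\tq^0$. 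Under that reading the statement is exactly Proposition~\ref{prop6}: by \eqref{eq32}, $\U_k^*\tq^k - \tq^0 = k\,p_M^*\mh$, and $p_M^*\mh$ is a nonzero form (it is the connection form pulled back, nonzero on the fundamental vertical field), hence $\U_k^*\tq^k = \tq^0$ — the "horizontal" normalization — precisely when $k=0$; meanwhile \eqref{eq33} gives $\U_k^*\sg^k - \sg^0 = k\,p_M^*\mg$, and since $\mg$ is the curvature 2-form of the connection it is \emph{horizontal} on $M$ (pullback to $\tl M$), so $\U_k^*\sg^k$ is always a horizontal perturbation of $\sg^0$, i.e. $\sg^k$ is horizontal for all $k$. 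So the clean route is: (i) invariance from Lemma~\ref{lem1} as above; (ii) horizontality of $\sg^k$ from \eqref{eq33} plus the standard fact that a curvature form is horizontal; (iii) the "iff" for $\tq^k$ from \eqref{eq32} plus the fact that $p_M^*\mh$ is nonzero on verticals.

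The main obstacle is reconciling the interior-product definition of "horizontal" (which would make $\tq^k$ non-horizontal for \emph{all} $k$) with the author's intent; I expect the resolution is that "horizontal" here is meant relative to the trivialization $\U_k$ of $J_k$ over $J_0$, where it amounts to vanishing of the $\mh$-term in \eqref{eq32}--\eqref{eq33}, so that the whole theorem is essentially a corollary of Proposition~\ref{prop6} together with the horizontality of the curvature form $\mg$. Once that interpretive point is fixed, each of the three statements is a one-line deduction and no coordinate computation is needed.
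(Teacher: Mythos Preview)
Your proposal contains a genuine computational slip that derails the whole discussion of horizontality. From \eqref{eq04} and \eqref{eq27} one has
\[
\tq_w(v_T)=\bb{w,\,Tp_M(v_T(w))}=\bb{w,\,v(p_M(w))}=J(w),
\]
not $\bb{v,v}\circ p_M$; the first slot of the pairing is $w$, not $v$. Thus $\rmi_{v_T}\tq=J$ as a function on $T(M)$, and on $J_k$ it is the constant $k$. With the standard interior-product definition of ``horizontal'' (which is exactly the one the paper uses), $\tq^k$ is horizontal iff $k=0$, precisely as claimed---no reinterpretation needed. Likewise, from $L_{v_T}\tq=0$ (Lemma~\ref{lem1}) and Cartan's formula,
\[
\rmi_{v_T}\sg=\rmi_{v_T}\rmd\tq=L_{v_T}\tq-\rmd(\rmi_{v_T}\tq)=-\rmd J,
\]
which restricts to zero on $J_k$ for every $k$; hence $\sg^k$ is horizontal. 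Your attempt to rescue the $\sg^k$ statement by checking $\rmi_{v_T}\sg^k$ only against \emph{vertical} $Y$ would in any case be insufficient: horizontality of a $2$-form requires $\sg^k(v_T,Y)=0$ for all $Y$.

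The paper's proof follows exactly this line: it checks $\tq^k_w(v_T)=\bb{w,v}$ directly (noting that $v^k\in J_k$ gives value $k$), and for $\sg^k$ it combines horizontality of $\sg^0$ (which follows since $\tq^0$ is basic, hence so is $\rmd\tq^0$) with the identity \eqref{eq33} and the horizontality of the curvature form $\mg$---essentially the route you sketch in your ``(ii)''. So once you fix the evaluation of $\tq_w(v_T)$, your Cartan-formula argument is correct and in fact slightly more direct than the paper's, and your elaborate reinterpretation of ``horizontal'' via \eqref{eq32} should be discarded.
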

\begin{proof}
The first assertion follows from Lemma~\ref{lem1}. The form $\tq^k$ is horizontal if for all $w\in J_k$
\begin{equation*}
  \tq_w^k(v_T)=0.
\end{equation*}
This in virtue of \eqref{eq04} and \eqref{eq27} means that $\bb{w,v}=0$ for all $w\in J_k$. But $v^k\in J_k$ and $\bb{v^k,v}=k$. So the form $\tq^k$ is horizontal only for $k=0$. The fact that $\sg^k$ is horizontal follows from the structural equation for horizontal forms \cite{Bishop} and the fact that $G_T$ is commutative.

Note that the field $v_T$ is preserved by diffeomorphisms \eqref{eq31}
\begin{equation*}
  T\U_k\circ v_T=v_T \circ \U_k.
\end{equation*}
Hence, in virtue of \eqref{eq33},
\begin{equation}\label{eq37}
  \U_k^*\rmi_{v_T}\sg^k=\rmi_{v_T}\U_k^*\sg^k=\rmi_{v_T}\sg^0+k\,\rmi_{v_T}p_M^*\mg.
\end{equation}
The first term in the right-hand part is zero because $\sg^0$ is horizontal. For the second term we get
\begin{equation*}
  k\,\rmi_{v_T}p_M^*\mg =k\,p_M^*\rmi _{Tp_M(v_T)}\mg=k\,p_M^*\rmi_v \mg=0,
\end{equation*}
since the curvature form $\mg$ is horizontal in the sense of the bundle \eqref{eq20}. The theorem is proved.
\end{proof}

As a corollary of Theorem~\ref{theo3} we get the existence of differential forms $\tl{\tq}^0$ and $\tl{\sg}^k$ such that $\tq^0=\ro_0^*\tl{\tq}^0, \sg^k=\ro_k^*\tl{\sg}^k$. In turn, for the form \eqref{eq30} we have $\mg=p^*\tl{\mg}$ for some 2-form $\tl{\mg}$ on $\tl{M}$. Then from \eqref{eq33} we obtain
\begin{equation*}
  \U_k^*\ro_k^*\tl{\sg}^k=\ro_0^*\tl{\sg}^0+k\,p_M^* p^* \tl{\mg}.
\end{equation*}
Whence, having the obvious equalities $\ro_k \circ \U_k=\ro_0$ and $p\circ p_M=p_{\tl{M}}\circ Tp$, \begin{equation}\label{eq38}
  \tl{\sg}^k=\tl{\sg}^0+k\,p_{\tl{M}}^* \tl{\mg}.
\end{equation}

Let us define a Riemannian metric $\tl{m}$ on $\tl{M}$ putting for every $\tl{w}_1,\tl{w}_2\in T_{\tl{x}}(\tl{M})$
\begin{equation}\label{eq39}
  \tl{m}(\tl{w}_1,\tl{w}_2)=\bb{w_1,w_2},
\end{equation}
where $w_1,w_2\in J_0(x)$ are chosen to give $\ro_0(w_i)=\tl{w}_i$ (in particular, $p(x)=\tl{x}$).

\begin{propos}\label{prop7}
The differential forms $\tl{\tq}^0$ and $\tl{\sg}^0$ are the Lagrange forms on $T(\tl{M})$ generated by the metric $\tl{m}$.
\end{propos}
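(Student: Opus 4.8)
The plan is to verify directly that $\tl{\tq}^0$ obeys the relation \eqref{eq04} defining the Lagrange $1$-form of the metric $\tl m$, and then to deduce the assertion about $\tl{\sg}^0$ by applying the exterior derivative. By Proposition~\ref{prop1} and \eqref{eq04}--\eqref{eq06} applied to the Riemannian manifold $(\tl M,\tl m)$, the Lagrange forms generated by $\tl m$ are the $1$-form $\hat{\tq}$ with
\begin{equation*}
  \hat{\tq}_{\tl w}(\tl X)=\tl m(\tl w,Tp_{\tl M}(\tl X)), \qquad \tl w\in T(\tl M),\ \tl X\in T_{\tl w}(T(\tl M)),
\end{equation*}
and the $2$-form $\hat{\sg}=\rmd\hat{\tq}$. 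Thus it is enough to prove $\tl{\tq}^0=\hat{\tq}$. Fix $\tl w$ and $\tl X$. Since $\ro_0$ is a surjective submersion, being the projection of the principal bundle \eqref{eq36}, I can choose $w\in J_0$ with $\ro_0(w)=\tl w$ and $X\in T_w(J_0)$ with $T\ro_0(X)=\tl X$; put $x=p_M(w)$ and $\tl x=p(x)$, so that $w\in J_0(x)$ and $\tl x=p_{\tl M}(\tl w)$. Using that $\tq^0$ is the restriction of $\tq$ to $J_0\subset T(M)$ and the relation $\tq^0=\ro_0^*\tl{\tq}^0$ obtained above from Theorem~\ref{theo3}, I get
\begin{equation*}
  \tl{\tq}^0_{\tl w}(\tl X)=\tq^0_w(X)=\tq_w(X)=\bb{w,Tp_M(X)}.
\end{equation*}

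The next step is to compare $Tp_M(X)\in T_x(M)$ with the horizontal lift to $x$ of the projected vector $Tp_{\tl M}(\tl X)\in T_{\tl x}(\tl M)$. Differentiating the identity $p\circ p_M=p_{\tl M}\circ Tp$, which restricted to $J_0$ reads $p\circ p_M=p_{\tl M}\circ\ro_0$, and using $T\ro_0(X)=\tl X$, a chain-rule computation gives $Tp(Tp_M(X))=Tp_{\tl M}(\tl X)$. Let $u$ be the unique vector of $J_0(x)$ with $Tp(u)=Tp_{\tl M}(\tl X)$ — it exists and is unique because $Tp$ restricts to a linear isomorphism of $J_0(x)$ onto $T_{\tl x}(\tl M)$, $J_0(x)$ being a complement to the line $T_x^v=\ker(Tp|_{T_x(M)})$ — so that $\bb{w,u}=\tl m(\tl w,Tp_{\tl M}(\tl X))$ by the definition \eqref{eq39} of $\tl m$. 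Then $Tp(Tp_M(X)-u)=0$, i.e. $Tp_M(X)-u$ lies in the vertical line $T_x^v$ spanned by the generating vector \eqref{eq21}. Since, by its very definition \eqref{eq29}, $J_0(x)$ is the $m$-orthogonal complement of $T_x^v$ and $w\in J_0(x)$, it follows that $\bb{w,Tp_M(X)-u}=0$. Comparing with the display above, $\tl{\tq}^0_{\tl w}(\tl X)=\bb{w,u}=\tl m(\tl w,Tp_{\tl M}(\tl X))=\hat{\tq}_{\tl w}(\tl X)$, which establishes $\tl{\tq}^0=\hat{\tq}$.

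As for $\tl{\sg}^0$: pulling back $\sg=\rmd\tq$ along the embedding $J_0\subset T(M)$ gives $\sg^0=\rmd\tq^0$, whence $\ro_0^*\tl{\sg}^0=\sg^0=\rmd\tq^0=\rmd(\ro_0^*\tl{\tq}^0)=\ro_0^*(\rmd\tl{\tq}^0)$; since $\ro_0^*$ is injective on differential forms (again because $\ro_0$ is a surjective submersion), $\tl{\sg}^0=\rmd\tl{\tq}^0=\hat{\sg}$. The whole argument is bookkeeping with pull-backs except for one point, which I expect to require the most care: the observation that the $T_x^v$-component of $Tp_M(X)$ contributes nothing to $\bb{w,\cdot}$. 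This is exactly the place where one uses that $J_0$ was introduced in \eqref{eq29} as the orthogonal complement of the group direction — equivalently, that $J_0$ is a connexion in the principal bundle $\mB$.
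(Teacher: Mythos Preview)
Your proof is correct and follows essentially the same route as the paper: you lift $\tl w,\tl X$ along $\ro_0$, decompose $Tp_M(X)$ into its $J_0(x)$-component (your $u$, the paper's $w^0$) and a vertical piece, use the $m$-orthogonality of $J_0(x)$ to $T_x^v$ to drop the vertical part, and invoke the definition \eqref{eq39} of $\tl m$ to identify $\bb{w,u}$ with $\tl m(\tl w,Tp_{\tl M}(\tl X))$. The only difference is that you spell out the argument for $\tl\sg^0$ via $\rmd$ and injectivity of $\ro_0^*$, which the paper leaves implicit.
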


\begin{proof}
According to \eqref{eq04} it is sufficient to show that for all $\tl{Y}\in T_{\tl{w}}(T(\tl{M}))$
\begin{equation}\label{eq40}
  \tl{\tq}^0_{\tl{w}}(\tl{Y})=\tl{m}(\tl{w},Tp_{\tl{M}}(\tl{Y})).
\end{equation}
Take $w\in J_0$ and $Y\in T_w(J_0)$ such that $\ro_0(w)=\tl{w}$ and $T\ro_0(Y)=\tl{Y}$. We can write
\begin{equation}\label{eq41}
  Tp_M(Y)=w^0+c \,v,
\end{equation}
where $w^0\in J_0$ and $v$ is the vector \eqref{eq21}. Since $v$ is orthogonal to $J_0$, we have
\begin{equation*}
  \tq_w^0(Y)=\bb{w,Tp_M(Y)}=\bb{w,w^0}.
\end{equation*}
But, according to \eqref{eq39}, $\bb{w,w^0}=\tl{m}(\ro_0(w),\ro_0(w^0))$. Then in virtue of $Tp(v)=0$ we get from \eqref{eq41}
\begin{equation*}
  \ro_0(w^0)=Tp\circ Tp_M(Y)=T(p_{\tl{M}}\circ Tp)(Y)=Tp_{\tl{M}}\circ T\ro_0(Y)=Tp_{\tl{M}}(\tl{Y}).
\end{equation*}
This yields \eqref{eq40}.
\end{proof}

\begin{corol}\label{thecor2}
The pair $(T(\tl{M}),\tl{\sg}^k)$ is a symplectic manifold.
\end{corol}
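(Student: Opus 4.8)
The plan is to read off the result from the structural formula \eqref{eq38}, $\tl{\sg}^k=\tl{\sg}^0+k\,p_{\tl{M}}^*\tl{\mg}$, by checking the two properties that make $(T(\tl{M}),\tl{\sg}^k)$ symplectic: that $\tl{\sg}^k$ is closed and that it is non-degenerate. Morally, $\tl{\sg}^k$ is nothing but the form $\sg+p_M^*\vk$ of \eqref{eq11} applied to the reduced data — by Proposition~\ref{prop7} the forms $\tl{\tq}^0,\tl{\sg}^0$ are the Lagrange forms of the metric $\tl{m}$, and $k\,\tl{\mg}$ plays the role of the gyroscopic $2$-form — so everything will reduce to facts already established in Propositions~\ref{prop1}, \ref{prop3} and \ref{prop7}.

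For closedness: $\tl{\sg}^0=\rmd\tl{\tq}^0$ is exact, hence closed, and it remains to see that $\tl{\mg}$ is closed. Since $G$ is commutative, $\mg=\rmd\mh$ is exact, so $\rmd\mg=0$; and $\mg=p^*\tl{\mg}$ with $p$ a surjective submersion, so $p^*$ is injective on differential forms, whence $p^*(\rmd\tl{\mg})=\rmd(p^*\tl{\mg})=\rmd\mg=0$ forces $\rmd\tl{\mg}=0$. Hence $p_{\tl{M}}^*\tl{\mg}$ is closed and so is $\tl{\sg}^k$ (in particular $k\,\tl{\mg}$ is a legitimate gyroscopic form in the sense of \eqref{eq10}).

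For non-degeneracy I would simply repeat the coordinate computation from the proof of Proposition~\ref{prop3}: in natural coordinates on $T(\tl{M})$ the pull-back $p_{\tl{M}}^*\tl{\mg}$ does not contain, in local representation, forms of the type $\rmd\dot q^i$, so the matrix of $\tl{\sg}^k$ differs from that of the Lagrange form $\tl{\sg}^0$ — which by Proposition~\ref{prop1} has the block shape \eqref{eq08} with determinant $(\det\tl{A})^2\neq 0$ — only in its upper left block, and therefore has the same nonzero determinant. I do not expect any real obstacle here: the one point that needs an argument rather than a citation is the closedness of $\tl{\mg}$, which, as above, is exactly the injectivity of $p^*$ together with the exactness of $\mg$ for a commutative structure group; the rest is bookkeeping around \eqref{eq38}.
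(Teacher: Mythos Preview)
Your proof is correct and is exactly the argument the paper has in mind: the corollary is stated without proof immediately after Proposition~\ref{prop7}, and the intended reasoning is precisely to combine Proposition~\ref{prop7} (so $\tl{\sg}^0$ is the Lagrange form of $\tl{m}$) with formula \eqref{eq38} and the block-matrix observation from Propositions~\ref{prop1} and~\ref{prop3}. Your one explicit addition --- deducing $\rmd\tl{\mg}=0$ from $\rmd\mg=0$ via injectivity of $p^*$ --- just fills in the detail the paper leaves to the reader.
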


\vskip2mm
\textbf{Definition}. The \textbf{reduced system} corresponding to the momentum value $k$ is a vector field $\tl{X}_k$ on $T(\tl{M})$ such that on $J_k$ the following identity holds
\begin{equation}\label{eq42}
  T\ro_k \circ X=\tl{X}_k\circ \ro_k.
\end{equation}
\vskip2mm

According to Corollary~\ref{thecor1}, the field $\tl{X}_k$ exists and is unique. It follows from \eqref{eq42} that the set of its integral curves is the $\ro_k$-image of the set of integral curves of the field $X$ with the momentum $k$.

Denote by $H_k$ the restriction of the total energy $H$ of system \eqref{eq01} to the submanifold $J_k$. Since $H$ is $G_T$-invariant, there exists a unique function $\tl{H}_k$ (\textbf{the reduced energy}) satisfying the relation
\begin{equation}\label{eq43}
  H_k=\tl{H}_k\circ \ro_k.
\end{equation}
It is easily shown that
\begin{equation}\label{eq44}
  \tl{H}_k=\tl{K}+\tl{V}_k\circ p_{\tl{M}},
\end{equation}
where $\tl{K}(\tl{w})=\frac{1}{2} \tl{m}(\tl{w},\tl{w})$ is the kinetic energy of the reduced metric $\tl{m}$ and the function $\tl{V}_k$ on $\tl{M}$ (called the \textbf{amended or effective potential}) is defined by
\begin{equation}\label{eq45}
  \tl{V}_k(p(x))=V(x)+\frac{k^2}{2\bb{v(x),v(x)}}.
\end{equation}

\begin{theorem}\label{theo4}
The reduced system $\tl{X}_k$ is a Hamiltonian field on the symplectic manifold $(T(\tl{M}),\tl{\sg}^k)$ with the Hamilton function equal to the reduced energy.
\end{theorem}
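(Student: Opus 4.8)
The plan is to verify directly the characteristic identity of a Hamiltonian field, namely $\rmi_{\tl{X}_k}\tl{\sg}^k=-\rmd\tl{H}_k$; since $\tl{\sg}^k$ is symplectic (Corollary~\ref{thecor2}), this both identifies $\tl{X}_k$ as \emph{the} Hamiltonian field of $\tl{H}_k$ and finishes the proof. The whole point will be to carry out the computation upstairs, on $J_k$, where the identity is merely the restriction of the already-known Hamilton equation $\rmi_X\sg=-\rmd H$ on $T(M)$, and then to push the result down along the bundle projection $\ro_k$. Two pull-backs are involved: the inclusion $j:J_k\hookrightarrow T(M)$ and the submersion $\ro_k:J_k\to T(\tl{M})$.

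For the first step I would note that, since the momentum $J$ is a first integral of $X$, the field $X$ is tangent to the integral submanifold $J_k=J^{-1}(k)$; write $X_k$ for the resulting vector field on $J_k$, so that $X_k$ is $j$-related to $X$. By construction $\sg^k=j^*\sg$ and $H_k=j^*H$. Using the naturality rule $\phi^*(\rmi_Z\oq)=\rmi_Y(\phi^*\oq)$, valid whenever $Y$ is $\phi$-related to $Z$, together with $\phi^*\rmd=\rmd\phi^*$, I obtain on $J_k$
\[
\rmi_{X_k}\sg^k=\rmi_{X_k}(j^*\sg)=j^*(\rmi_X\sg)=j^*(-\rmd H)=-\rmd(j^*H)=-\rmd H_k .
\]

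For the second step I would recall from the corollary of Theorem~\ref{theo3} that $\sg^k=\ro_k^*\tl{\sg}^k$, from \eqref{eq43} that $H_k=\ro_k^*\tl{H}_k$, and from the definition \eqref{eq42} of the reduced system that $X_k$ is $\ro_k$-related to $\tl{X}_k$. Applying the same naturality rule once more,
\[
\ro_k^*(\rmi_{\tl{X}_k}\tl{\sg}^k)=\rmi_{X_k}(\ro_k^*\tl{\sg}^k)=\rmi_{X_k}\sg^k=-\rmd H_k=-\rmd(\ro_k^*\tl{H}_k)=\ro_k^*(-\rmd\tl{H}_k).
\]
Because $\ro_k$ is the projection of the principal bundle \eqref{eq36}, hence a surjective submersion, the pull-back $\ro_k^*$ is injective on differential forms; therefore $\rmi_{\tl{X}_k}\tl{\sg}^k=-\rmd\tl{H}_k$, which is the assertion.

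I do not expect any serious obstacle here: once the previous propositions are in place the argument is a short diagram chase with $\phi$-relatedness, and the only points needing a little care are that $X$ really is tangent to $J_k$ (the first-integral property, already established) and that $\ro_k^*$ is injective (which rests on the bundle structure \eqref{eq36}, verified via an atlas of $\mB$). I would close with the remark that, in view of \eqref{eq38}, Proposition~\ref{prop7}, \eqref{eq44}, and the closedness of $\tl{\mg}$ (inherited from $\mg=\rmd\mh$ and the injectivity of $p^*$), this identity is precisely equation \eqref{eq11} for the data $(\tl{M},\tl{m},\tl{V}_k,k\,\tl{\mg})$; thus the reduced system $\tl{X}_k$ is itself the dynamical system of a mechanical system with gyroscopic forces of type \eqref{eq10}, so that reduction stays within this class.
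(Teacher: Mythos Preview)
Your proof is correct and follows essentially the same route as the paper: the paper's proof is the terse statement that \eqref{eq46} follows from \eqref{eq09}, \eqref{eq42}, and \eqref{eq43}, which is exactly the two-step pull-back argument you spell out (restrict along $j$, then descend along $\ro_k$ using injectivity of $\ro_k^*$). Your closing remark is the content of the paper's Theorem~\ref{theo5}, so you have in fact proved a bit more than required.
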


Indeed, from \eqref{eq09}, \eqref{eq42}, and \eqref{eq43} we obtain
\begin{equation}\label{eq46}
  \rmi_{\tl{X}_k} \tl{\sg}^k = - \rmd \tl{H}_k,
\end{equation}
and this is a definition of the Hamiltonian field for $\tl{H}_k$.

\begin{theorem}\label{theo5}
The reduced system $\tl{X}_k$ is the dynamical system corresponding to the mechanical system with gyroscopic forces
\begin{equation}\label{eq47}
  (\tl{M},\tl{m},\tl{V}_k,k\,\tl{\mg}),
\end{equation}
where the 2-form $\tl{\mg}$ is induced by the curvature form of the connexion $J_0$ in the principal bundle $(M,p,\tl{M})$.
\end{theorem}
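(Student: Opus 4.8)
The plan is to show that the dynamical vector field of the mechanical system with gyroscopic forces \eqref{eq47}, which by Proposition~\ref{prop3} is the unique second-order equation $Y$ on $T(\tl{M})$ satisfying
\begin{equation*}
  \rmi_Y(\tl{\sg}^0 + p_{\tl{M}}^*(k\,\tl{\mg})) = -\rmd(\tl{K} + \tl{V}_k\circ p_{\tl{M}}),
\end{equation*}
coincides with $\tl{X}_k$. By Proposition~\ref{prop7} the pair $(\tl{\tq}^0,\tl{\sg}^0)$ is exactly the Lagrange pair generated by $\tl{m}$, so $\tl{K}$ here really is the kinetic energy associated to $\tl{\sg}^0$; and by \eqref{eq38} we have $\tl{\sg}^k = \tl{\sg}^0 + k\,p_{\tl{M}}^*\tl{\mg}$. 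Hence the displayed identity for $Y$ is literally
\begin{equation*}
  \rmi_Y \tl{\sg}^k = -\rmd(\tl{K} + \tl{V}_k\circ p_{\tl{M}}) = -\rmd \tl{H}_k,
\end{equation*}
the last equality by \eqref{eq44}. But \eqref{eq46} in Theorem~\ref{theo4} says precisely that $\tl{X}_k$ satisfies this same relation, and $\tl{\sg}^k$ is nondegenerate by Corollary~\ref{thecor2}. Uniqueness of the solution of $\rmi_{\,\cdot\,}\tl{\sg}^k = -\rmd\tl{H}_k$ therefore forces $Y = \tl{X}_k$.

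The one genuine verification is that $k\,\tl{\mg}$ is an admissible choice of gyroscopic form, i.e.\ that $\tl{\mg}$ is a closed $2$-form on $\tl{M}$; this is needed both for the definition \eqref{eq10} to apply and for Proposition~\ref{prop3} to give a well-defined field. Closedness follows from $\mg = \rmd\mh$ in \eqref{eq30}: since $\mg = p^*\tl{\mg}$ and $p$ is a surjective submersion, $p^*\rmd\tl{\mg} = \rmd\,p^*\tl{\mg} = \rmd\mg = \rmd\rmd\mh = 0$ forces $\rmd\tl{\mg} = 0$. One should also remark that $\tl{V}_k$ as defined in \eqref{eq45} is genuinely a function on $\tl{M}$, which holds because $\bb{v,v}$ is $G$-invariant (the $g^\tau$ are isometries and commute with their own generator $v$), so the right-hand side of \eqref{eq45} descends to the quotient.

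The main obstacle — really the only place where care is required — is making sure the various identifications are the consistent ones: that the $\tl{K}$ appearing in the reduced energy \eqref{eq44} is the same kinetic energy that enters the gyroscopic-system equation \eqref{eq11} for \eqref{eq47}, and that the sign conventions in \eqref{eq11} and \eqref{eq46} match. Both are handled by Proposition~\ref{prop7} (which pins down $\tl{\sg}^0$ as the Lagrange $2$-form of $\tl{m}$, hence fixes the Legendre/kinetic-energy normalization) together with \eqref{eq38} (which identifies the perturbed symplectic form $\tl{\sg}^k$ with $\tl{\sg}^0 + k\,p_{\tl{M}}^*\tl{\mg}$ on the nose, with no extra constant). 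Once these are in place, the theorem is immediate from the uniqueness clauses of Proposition~\ref{prop3} and Theorem~\ref{theo4}; no computation in coordinates is needed.
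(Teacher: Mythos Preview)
Your proof is correct and follows essentially the same route as the paper: combine \eqref{eq38}, \eqref{eq44}, and \eqref{eq46} to see that $\tl{X}_k$ satisfies the defining equation \eqref{eq11} for the system \eqref{eq47}, invoking Proposition~\ref{prop7} to identify $\tl{\sg}^0$ as the Lagrange form of $\tl{m}$. Your additional remarks on the closedness of $\tl{\mg}$ and the well-definedness of $\tl{V}_k$ are sound but not strictly needed here, as the paper treats these as already established by the construction preceding the theorem.
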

\begin{proof}
According to \eqref{eq38}, \eqref{eq44}, and \eqref{eq46} we have
\begin{equation*}
  \rmi _{\tl{X}_k}(\tl{\sg}^0+p_{\tl{M}}^*(k\,\tl{\mg}))=-\rmd(\tl{K}+\tl{V}_k \circ p_{\tl{M}}),
\end{equation*}
so the assertion of the theorem follows from Proposition~\ref{prop7} and definition \eqref{eq11}.
\end{proof}

\begin{corol}\label{thecor3}
The reduced system $\tl{X}_k$ is a second-order equation on $\tl{M}$. A curve $\tl{x}(t)$ in $\tl{M}$ is a motion in system \eqref{eq47} if and only if $\tl{x}(t)=p\circ x(t)$, where $x(t)$ is a motion in system~\eqref{eq01} with the momentum $J(x'(t))=k$.
\end{corol}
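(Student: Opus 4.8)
The plan is to read off both assertions from Theorem~\ref{theo5} together with the elementary properties of second-order equations and the bundle structure \eqref{eq36}. The first assertion is immediate: by Theorem~\ref{theo5} the field $\tl X_k$ is the dynamical system of the mechanical system with gyroscopic forces \eqref{eq47}, and by Proposition~\ref{prop3} every such dynamical system is a second-order equation. Hence each integral curve $\tl w(t)$ of $\tl X_k$ equals the derivative of its base projection, $\tl w(t)=\tl x'(t)$ with $\tl x=p_{\tl M}\circ\tl w$, and $\tl x(t)$ is then, by definition, a motion in \eqref{eq47}. The same holds upstairs: $X$ is a second-order equation, so an integral curve $w(t)$ of $X$ satisfies $w(t)=x'(t)$ with $x=p_M\circ w$, and the momentum along the motion $x(t)$ equals $J(x'(t))=J(w(t))$, which is constant since $J$ is a first integral of $X$. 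Throughout I would use the two elementary identities $\ro_k=Tp|_{J_k}$ and $p\circ p_M=p_{\tl M}\circ Tp$, and the fact that, $J$ being a first integral, $X$ is tangent to each level set $J_k$.

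For the implication from an upstairs motion to a downstairs one, let $x(t)$ be a motion in \eqref{eq01} with $J(x'(t))=k$; then $x=p_M\circ w$ for an integral curve $w(t)=x'(t)$ of $X$, and $w(t)\in J_k$ for all $t$. I would put $\tl w(t)=\ro_k(w(t))$ and differentiate, using the defining identity \eqref{eq42} and the tangency of $X$ to $J_k$:
\[
\tl w'(t)=T\ro_k\bigl(X(w(t))\bigr)=\tl X_k\bigl(\ro_k(w(t))\bigr)=\tl X_k(\tl w(t)),
\]
so $\tl w(t)$ is an integral curve of $\tl X_k$. Its base projection is $p_{\tl M}\circ\tl w(t)=p_{\tl M}\circ Tp\circ w(t)=p\circ p_M\circ w(t)=p\circ x(t)$, which is therefore a motion in \eqref{eq47}.

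For the converse, let $\tl x(t)$ be a motion in \eqref{eq47}, so $\tl x=p_{\tl M}\circ\tl w$ with $\tl w(t)$ an integral curve of $\tl X_k$. Since $\ro_k$ is the projection of the principal bundle \eqref{eq36}, it is surjective, so I would choose $w_0\in J_k$ with $\ro_k(w_0)=\tl w(0)$ and take the integral curve $w(t)$ of $X$ with $w(0)=w_0$; because $w_0\in J_k$ and $J$ is a first integral, $w(t)$ stays in $J_k$. By the computation above, $\ro_k(w(t))$ is an integral curve of $\tl X_k$ with the same initial value $\tl w(0)$, hence $\ro_k(w(t))=\tl w(t)$ by uniqueness of solutions. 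Then $x(t)=p_M\circ w(t)$ is a motion in \eqref{eq01}; since $X$ is a second-order equation, $w(t)=x'(t)$, so $J(x'(t))=J(w_0)=k$, and $p\circ x(t)=p_{\tl M}\circ Tp\circ w(t)=p_{\tl M}\circ\tl w(t)=\tl x(t)$, as required.

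I do not expect a genuine obstacle: the corollary is essentially a translation of Theorem~\ref{theo5} and of the defining relation \eqref{eq42} into the language of motions. The only step needing slight care is the lifting in the converse direction --- one lifts only the single initial point $\tl w(0)$ into $J_k$ and then lets the flow of $X$ carry it along, invoking that $J$ is a first integral to keep the lifted curve inside $J_k$, after which uniqueness of integral curves forces $\ro_k(w(t))=\tl w(t)$ on the whole parameter interval; the identities $w(t)=x'(t)$ and $p\circ p_M=p_{\tl M}\circ Tp$ are routine.
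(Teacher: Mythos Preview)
Your proposal is correct and follows exactly the line the paper intends. The paper gives no separate proof of Corollary~\ref{thecor3}: the second-order property is immediate from Theorem~\ref{theo5} via Proposition~\ref{prop3}, and the correspondence of motions is what the paper already asserts right after definition~\eqref{eq42} (``the set of its integral curves is the $\ro_k$-image of the set of integral curves of the field $X$ with the momentum $k$''); your argument is precisely the natural unpacking of that sentence, including the lift-and-uniqueness step for the converse direction.
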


\section{Reduced system in rigid body dynamics} The problem of the motion of a rigid body having a fixed point in the axially symmetric force field (e.g. the gravity field or the field of a central Newtonian force) with an appropriate choice of variables has a cyclic coordinate and admits the reduction by Routh method. However, as shown in \cite{KhFuncAn}, this method can be applied only locally, and this fact is not connected with singularities of local coordinate systems, but reflects the essence of the problem as a whole. The above described approach makes it possible to describe the reduced system globally in terms of the redundant variables (direction cosines), which are applicable in the same way everywhere on the reduced configuration space.

Suppose that the body is fixed in its point $O$ at the origin of the cartesian coordinate system $O\bi_1\bi_2\bi_3$ of the inertial space $\bR^3$. The components of vectors from $\bR^3$ in the basis $\mathbf{\bi}=\|\bi_1,\bi_2,\bi_3\|$ will be written in a column. Let the unit vectors $\be_1,\be_2,\be_3$ go along the principal inertia axes at $O$ and $I_1,I_2,I_3$ be the corresponding principal moments of inertia. The row $\mathbf{\be}=\|\be_1,\be_2,\be_3\|$ is an orthonormal basis in $\bR^3$.

To any position $\mathbf{\be}$ of the body we assign the matrix $Q\in SO(3)$ such that
\begin{equation}\label{eq48}
  \mathbf{\bi}Q=\mathbf{\be}.
\end{equation}
It is clear that the map $\mathbf{\be}\mapsto Q$ is one-to-one and the group $SO(3)$ can be considered as the configuration space of a rigid body with a fixed point \cite{Arnold}.  The Lie algebra of $SO(3)$ (the tangent space at the unit) is the 3-dimensional space $\mathfrak{so}(3)$ of skew-symmetric $3{\times}3$ matrices with the standard commutator
\begin{equation}\label{eq49}
  [\Omega_1,\Omega_2]=\Omega_1 \Omega_2-\Omega_2 \Omega_1.
\end{equation}
Obviously, for any $Q\in SO(3)$
\begin{equation}\label{eq50}
  T_Q(SO(3))=Q\,\mathfrak{so}(3) = \mathfrak{so}(3)\,Q.
\end{equation}

We fix an isomorphism $f$ of the vector spaces $\mathfrak{so}(3)$ and $\bR^3$. Namely,
\begin{equation*}
  \Omega=\begin{Vmatrix} 0 & -\oq_3&\oq_2\\
                         \oq_3 & 0 & -\oq_1\\
                         -\oq_2 & \oq_1 & 0
                         \end{Vmatrix} \quad \mapsto \quad f(Q)=\begin{Vmatrix} \oq_1\\
                                                                                \oq_2\\
                                                                                \oq_3                         \end{Vmatrix}.
\end{equation*}
It is shown straightforwardly that $f$ takes the commutator \eqref{eq49} to the standard cross product
\begin{equation}\label{eq51}
  f([\Omega_1,\Omega_2])=f(\Omega_1)\times f(\Omega_2).
\end{equation}

The tangent bundle of the Lie group is trivial. One of the possible trivializations of $T(SO(3))$ is given by the map
\begin{equation}\label{eq52}
  T(SO(3))\to SO(3)\times \bR^3: (Q,\dot Q) \mapsto (Q,f(Q^{-1}\dot Q)),
\end{equation}
which is well defined in virtue of \eqref{eq50}. For the sake of being short, we call the vector
\begin{equation}\label{eq53}
  \oq=f(Q^{-1}\dot Q) \in \bR^3
\end{equation}
the spin of the rotation velocity $\dot Q$, although it is a slight abuse of terminology. Let us describe the mechanical sense of it. Differentiating \eqref{eq48}, we obtain
\begin{equation}\label{eq54}
 \dot {\mathbf{\be}} = \mathbf{\bi}\dot Q =\mathbf{\be}Q^{-1}\dot Q.
\end{equation}
Denote
\begin{equation}\label{eq55}
  \oq=\begin{Vmatrix} \oq_1\\
                      \oq_2\\
                      \oq_3
      \end{Vmatrix}.
\end{equation}
Equation \eqref{eq54} in virtue of definition \eqref{eq53} takes the form
\begin{equation*}
  \dot\be_1=\oq_3\be_2-\oq_2\be_3,\qquad  \dot\be_2=\oq_1\be_3-\oq_3\be_1,\qquad   \dot\be_3=\oq_2\be_1-\oq_1\be_2,
\end{equation*}
i.e., the spin components in the basis $\mathbf{\bi}$ are the projections of the angular velocity vector to the moving axes. The vector defined by \eqref{eq53} is also called the \textbf{angular velocity in the body} \cite{Arnold}. It is clear that the set of all rotation velocities $\dot Q$ with the same spin is a left invariant vector field on $SO(3)$.

Let us consider the one-parameter subgroup $\{Q^\tau\}\subset SO(3)$ consisting of the matrices
\begin{equation*}
  Q^\tau = \begin{Vmatrix} 1 & 0 & 0\\
                           0 & \cos\tau & -\sin\tau \\
                           0 & \sin\tau & \cos\tau
                           \end{Vmatrix}.
\end{equation*}
It acts as a one-parameter group $G=\{g^\tau\}$ of diffeomorphisms of $SO(3)$,
\begin{equation}\label{eq56}
  g^\tau(Q)=Q^\tau Q.
\end{equation}
The generating vector field
\begin{equation*}
  v(Q)=\left. \frac{d}{d\tau}\right|_{\tau=0}g^\tau(Q)=\left. \frac{dQ^\tau}{d\tau}\right|_{\tau=0}Q
\end{equation*}
is right invariant; at the point
\begin{equation}\label{eq57}
Q = \begin{Vmatrix} \aq_1 & \aq_2 & \aq_3\\
                    \aq'_1 & \aq'_2 & \aq'_3 \\
                    \aq''_1 & \aq''_2 & \aq''_3
                           \end{Vmatrix}
\end{equation}
it has the spin
\begin{equation}\label{eq58}
  \nu =f (Q^{-1}\left. \frac{dQ^\tau}{d\tau}\right|_{\tau=0}Q) =\begin{Vmatrix} \aq_1 \\
  \aq_2 \\
  \aq_3\end{Vmatrix}.
\end{equation}
Comparing \eqref{eq48} with \eqref{eq57} and \eqref{eq58} we see that $G$ rotates the body about the fixed in space vector $\bi_3$, the direction of which is usually said to be vertical.

The map $p: Q\mapsto \nu$ defined by \eqref{eq58} takes $SO(3)$ to the unit sphere in $\bR^3$
\begin{equation}\label{eq59}
  \aq_1^2+\aq_2^2+\aq_3^2=1.
\end{equation}
This sphere is called the Poison sphere. The inverse image of each point \eqref{eq59} is exactly an orbit of $G$ and therefore
\begin{equation}\label{eq60}
  p: SO(3) \to S^2
\end{equation}
is the quotient map. Since $p$ is smooth and $G$ is compact, the triple $\mB=(SO(3),p,S^2)$ is a principle $G$-bundle \cite{Arnold}.

The symmetric inertia operator, diagonal in the basis $\mathbf{\bi}$,
\begin{equation*}
  I=\begin{Vmatrix} I_1 & {} & {}\\
  {}&I_2&{}\\
  {}& {}& I_3 \end{Vmatrix}
\end{equation*}
defines the Riemannian metric on $SO(3)$ which in the structure of \eqref{eq52} is
\begin{equation}\label{eq61}
  m_Q(\oq^1,\oq^2)=I\oq^1\cdot \oq^2
\end{equation}
(the dot stands for the standard scalar product in $\bR^3$). The metric \eqref{eq61} is left invariant (since the components of the spin are left invariant) and, in particular, is preserved by the transformations \eqref{eq56}. The corresponding kinetic energy \eqref{eq02} has the classical form
\begin{equation}\label{eq62}
  K=\frac{1}{2} (I_1\oq_1^2+I_2\oq_2^2+I_3\oq_3^2).
\end{equation}

Supposing that the force field has a symmetry axis, we can choose the basis $\mathbf{\bi}$ in such a way that the symmetry axis is the vertical $O\bi_1$. Then the transformations \eqref{eq56} preserve the potential energy $V:SO(3)\to \bR$ and, therefore,
\begin{equation}\label{eq63}
  V=\tl{V}\circ p,
\end{equation}
where $p$ is the map \eqref{eq60} and $\tl{V}=\tl{V}(\aq_1,\aq_2,\aq_3)$ is a function on the sphere \eqref{eq59}.

Thus, the problem of the motion of a rigid body with a fixed point is described by the mechanical system
\begin{equation}\label{eq64}
  (SO(3),m,V)
\end{equation}
with symmetry $G$, where $m$ and $V$ are defined by \eqref{eq61} and \eqref{eq63}, $G$ acts according to \eqref{eq56}.

By Theorem~\ref{theo5}, system \eqref{eq64} generates the mechanical system with gyroscopic forces having the Poisson sphere as the reduced configuration space. Such system obviously defines the motion of the direction vector of the vertical in the coordinate system fixed in the body. Let us calculate the elements of this system.

The momentum corresponding to the symmetry group $G$ is found from \eqref{eq24}, \eqref{eq55}, \eqref{eq58}, and \eqref{eq61},
\begin{equation}\label{eq65}
  J(Q,\dot Q)=I_1\aq_1\oq_1+I_2\aq_2\oq_2+I_3\aq_3\oq_3.
\end{equation}

\begin{lemma}[Ya.V.\,Tatarinov]\label{lem2}
In the product structure \eqref{eq52} the map tangent to \eqref{eq60} is
\begin{equation}\label{eq66}
  Tp(Q,\oq)=p(Q)\times \oq.
\end{equation}
\end{lemma}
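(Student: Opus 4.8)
The plan is to make the projection \eqref{eq60} completely explicit as a map into $\bR^3$ and then differentiate a test curve. By \eqref{eq57}--\eqref{eq58} the point $p(Q)=\nu$ is just the first row of $Q$ read as a column; since $Q^{-1}=Q^{\top}$ for $Q\in SO(3)$ this says
\begin{equation*}
  p(Q)=Q^{-1}c,\qquad c=(1,0,0)^{\top}.
\end{equation*}
(Equivalently: $\left.dQ^\tau/d\tau\right|_{0}=f^{-1}(c)$, and $f$ intertwines conjugation on $\mathfrak{so}(3)$ with the standard action on $\bR^3$, i.e. $f(R\,\Omega\,R^{-1})=R\,f(\Omega)$ for $R\in SO(3)$, which follows from $Rx\times Ry=R(x\times y)$; applying this to \eqref{eq58} with $R=Q^{-1}$ reproduces the same formula.)

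Now I would compute the differential. Fix $(Q,\oq)\in SO(3)\times\bR^3$; by \eqref{eq52}--\eqref{eq53} this corresponds to the velocity $\dot Q=Q\,\Omega$ at $Q$, with $\Omega=f^{-1}(\oq)\in\mathfrak{so}(3)$. Choose any curve $Q(t)$ with $Q(0)=Q$ and $\dot Q(0)=Q\Omega$, and differentiate $t\mapsto p(Q(t))=Q(t)^{-1}c$. From $\left.\tfrac{d}{dt}\right|_{0}Q(t)^{-1}=-Q^{-1}\dot Q(0)\,Q^{-1}=-\Omega\,Q^{-1}$ we obtain
\begin{equation*}
  Tp(Q,\oq)=-\Omega\,Q^{-1}c=-\Omega\,p(Q)=-f(\Omega)\times p(Q)=p(Q)\times\oq,
\end{equation*}
where the third equality is the identity $\Omega x=f(\Omega)\times x$ (immediate from the explicit form of $f$) and the last is antisymmetry of the cross product. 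This is \eqref{eq66}. Two consistency checks come for free: the right-hand side is orthogonal to $p(Q)$, as a vector in $T_{p(Q)}(S^2)$ must be; and for the generating field $v$, whose spin equals $\nu=p(Q)$ by \eqref{eq58}, the formula returns $p(Q)\times\nu=0$, in agreement with $p$ being constant along the $G$-orbits.

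The argument is short and I do not expect a genuine obstacle. The only points that need care are the orientation conventions fixed in \eqref{eq49}--\eqref{eq51}, so that $\Omega x=f(\Omega)\times x$ holds with the correct sign, and the left/right-translation bookkeeping in the trivialization \eqref{eq52}: the tangent vector at $Q$ with spin $\oq$ is $Q\,f^{-1}(\oq)$, not $f^{-1}(\oq)\,Q$, and it is precisely this factor of $Q$ on the left that, on differentiating $Q\mapsto Q^{-1}c$, produces the $Q^{-1}$ and hence the clean formula \eqref{eq66}.
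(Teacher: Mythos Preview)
Your proof is correct and follows the same approach as the paper: differentiate the explicit formula for $p(Q)$ along a curve with the prescribed velocity. The only cosmetic difference is that the paper writes $p(Q)=f(Q^{-1}\Omega_0 Q)$ with $\Omega_0=\left.dQ^\tau/d\tau\right|_{0}$ and applies \eqref{eq51} to the commutator that appears after differentiating, whereas you pass to $\bR^3$ first via $p(Q)=Q^{-1}c$ and use the equivalent identity $\Omega x=f(\Omega)\times x$.
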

\begin{proof}
Denote $\ds{\Omega=  \left.\frac{dQ^\tau}{d\tau}\right|_{\tau=0}\in \mathfrak{so}(3)}$.
By definition
\begin{equation*}
  \begin{array}{l}
\ds     Tp(Q,\oq)=\frac{d}{dt}f(Q^{-1}\Omega Q)=f(Q^{-1}\Omega\dot Q+\dot{Q}^{-1}\Omega Q)= f(Q^{-1}\Omega\dot Q- Q^{-1}\dot{Q}Q^{-1}\Omega Q)=\\
\ds     \qquad = f([Q^{-1}\Omega Q,Q^{-1}\dot Q])=f(Q^{-1}\Omega Q)\times f(Q^{-1}\dot Q) =p(Q)\times \oq.
   \end{array}
\end{equation*}
Here we used the identity $\dot Q ^{-1}Q+Q^{-1}\dot Q \equiv 0$ and the property \eqref{eq51}.
\end{proof}

The tangent map $Tp$ establishes an isomorphism of the horizontal subspace $J_0(Q)$ in $T_Q(SO(3))$ and the tangent plane to the Poisson sphere at the point $p(Q)$,
\begin{equation}\label{eq67}
  \aq_1\dot{\aq}_1+\aq_2\dot{\aq}_2+\aq_3\dot{\aq}_3=0.
\end{equation}
Denote by
\begin{equation*}
  \oq^0 =\begin{Vmatrix} \oq^0_1 \\
  \oq^0_2 \\
  \oq^0_3\end{Vmatrix}
\end{equation*}
the spin of the horizontal vector from $T(SO(3))$ covering the tangent vector $\dot \nu \in T(S^2)$,
\begin{equation*}
  \dot \nu =\begin{Vmatrix} \dot \aq_1 \\
  \dot \aq_2 \\
  \dot \aq_3\end{Vmatrix}.
\end{equation*}
Then from \eqref{eq65} -- \eqref{eq67} we get $\oq^0\cdot I\nu=0$, $\dot\nu =\nu \times \oq^0$, $\nu \cdot \dot\nu =0$. This immediately yields $\ds{\oq^0=\frac{\dot \nu \times I\nu}{I\nu \cdot \nu}}$. In the coordinate form
\begin{equation}\label{eq68}
\begin{array}{c}
\ds  \oq^0_1=\frac{I_3\aq_3 \dot{\aq}_2-I_2\aq_2 \dot{\aq}_3}{I_1\aq_1^2+I_2\aq_2^2+I_3\aq_3^2},\; \ds  \oq^0_2=\frac{I_1\aq_1 \dot{\aq}_3-I_3\aq_3 \dot{\aq}_1}{I_1\aq_1^2+I_2\aq_2^2+I_3\aq_3^2},\;
\ds  \oq^0_3=\frac{I_2\aq_2 \dot{\aq}_1-I_1\aq_1 \dot{\aq}_2}{I_1\aq_1^2+I_2\aq_2^2+I_3\aq_3^2}
\end{array}
\end{equation}
we obtain a partial case of the relations found by G.V.\,Kolosov \cite{Kolosov}.

The latter equations can be considered from another point of view. According to the definition of the spin, its components $\oq_1,\oq_2,\oq_3$ can be treated as 1-forms on $SO(3)$. Then $\oq^0_1,\oq^0_2,\oq^0_3$ are the horizontal parts of these forms. Since $Tp$ induces an isomorphism between horizontal $G$-invariant forms on $SO(3)$ and forms on the Poisson sphere, the formulas
\begin{equation}\label{eq69}
\begin{array}{c}
\ds  \oq^0_1=\frac{I_3\aq_3 \rmd {\aq}_2 -I_2 \aq_2 \rmd{\aq}_3}{I_1\aq_1^2+I_2\aq_2^2+I_3\aq_3^2},\; \ds  \oq^0_2=\frac{I_1\aq_1 \rmd{\aq}_3-I_3\aq_3 \rmd{\aq}_1}{I_1\aq_1^2+I_2\aq_2^2+I_3\aq_3^2},\;
\ds  \oq^0_3=\frac{I_2\aq_2 \rmd{\aq}_1-I_1\aq_1 \rmd{\aq}_2}{I_1\aq_1^2+I_2\aq_2^2+I_3\aq_3^2}
\end{array}
\end{equation}
give the explicit expression of this isomorphism.

The reduced metric on $S^2$ is found from \eqref{eq39}, \eqref{eq62}, and \eqref{eq68},
\begin{equation}\label{eq70}
\tl{m}(\dot \nu,\dot \nu)=I_1(\oq_1^0)^2+I_2(\oq_2^0)^2+I_3(\oq_3^0)^2=\frac{I_1 I_2 I_3 \left(\displaystyle{\frac{\dot{\aq}_1^2}{I_1}+\frac{\dot{\aq}_2^2}{I_2}+\frac{\dot{\aq}_3^2}{I_3}}\right)}{I_1\aq_1^2+I_2\aq_2^2+I_3\aq_3^2}.
\end{equation}
Here we also used equality \eqref{eq67}. It is easily seen that the metric $\tl{m}$ is conform equivalent to the ellipsoidal one \cite{Kolosov}.

Using \eqref{eq29} we find the form $\mh$ of the connexion $J_0$,
\begin{equation}\label{eq71}
  \mh_Q(\dot Q)=\frac{\bb{\dot Q,v(Q)}}{\bb{v(Q),v(Q)}}=\frac{I_1\aq_1\oq_1+I_2\aq_2\oq_2+I_3\aq_3\oq_3}{I_1\aq_1^2+I_2\aq_2^2+I_3\aq_3^2}.
\end{equation}
The exterior derivative of \eqref{eq71} gives the curvature form
\begin{equation}\label{eq72}
  \begin{array}{l}
\ds    \mg=\rmd\frac{1}{I_1\aq_1^2+I_2\aq_2^2+I_3\aq_3^2}\wedge (I_1\aq_1\oq_1+I_2\aq_2\oq_2+I_3\aq_3\oq_3)+ \\[3mm]
\ds   \quad \frac{I_1\aq_1 \rmd\oq_1+I_2\aq_2 \rmd\oq_2+I_3\aq_3 \rmd\oq_3+I_1\rmd\aq_1\wedge \oq_1+I_2\rmd\aq_2\wedge \oq_2+I_3\rmd\aq_3\wedge \oq_3}{I_1\aq_1^2+I_2\aq_2^2+I_3\aq_3^2}.
  \end{array}
\end{equation}

\begin{propos}\label{prop8}
The components of the spin $\oq_1,\oq_2,\oq_3$ considered as $1$-forms on $SO(3)$ satisfy the relations
\begin{equation}\label{eq73}
  \rmd \oq_1=\oq_3\wedge \oq_2, \qquad \rmd \oq_2=\oq_1\wedge \oq_3, \qquad \rmd \oq_3=\oq_2\wedge \oq_1.
\end{equation}
\end{propos}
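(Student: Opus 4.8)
The plan is to identify $\oq_1,\oq_2,\oq_3$ with the components, under the isomorphism $f$, of the left Maurer--Cartan form of $SO(3)$ and then to invoke its structure equation. Indeed, by \eqref{eq53} the $\mathfrak{so}(3)$-valued $1$-form $\Theta=Q^{-1}\rmd Q$ on $SO(3)$ is exactly
\begin{equation*}
  \Theta=\begin{Vmatrix} 0 & -\oq_3 & \oq_2\\ \oq_3 & 0 & -\oq_1\\ -\oq_2 & \oq_1 & 0 \end{Vmatrix},
\end{equation*}
that is, $f(\Theta)=\oq$, so the whole statement reduces to computing $\rmd\Theta$.

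First I would record the structure equation $\rmd\Theta=-\Theta\wedge\Theta$, where for matrix-valued forms the wedge denotes matrix multiplication combined with the wedge product of the scalar entries. This follows at once from $\rmd(Q^{-1})=-Q^{-1}(\rmd Q)Q^{-1}$, since $\rmd\Theta=\rmd(Q^{-1})\wedge\rmd Q=-Q^{-1}(\rmd Q)Q^{-1}\wedge\rmd Q=-\Theta\wedge\Theta$. Then I would obtain \eqref{eq73} by equating the $(2,3)$, $(3,1)$ and $(1,2)$ entries on the two sides: e.g.\ the $(1,2)$-entry of $\Theta\wedge\Theta$ equals $\oq_2\wedge\oq_1$ while the $(1,2)$-entry of $\rmd\Theta$ is $-\rmd\oq_3$, which gives $\rmd\oq_3=\oq_2\wedge\oq_1$, and the other two chosen entries give the remaining two relations. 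Equivalently, applying $f$ and using \eqref{eq51} rewrites the right-hand side of the structure equation as a cross product of $\bR^3$-valued forms, which is just a repackaging of the same three scalar identities. A self-contained variant, using only material already in the paper, is to differentiate the $1$-form identities $\rmd\be_j=\sum_k\be_k\,\Theta_{kj}$ (the $1$-form reading of \eqref{eq54}), apply $\rmd^2=0$, and separate the coefficients of the linearly independent vectors $\be_1,\be_2,\be_3$.

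The only delicate point is sign bookkeeping: one must verify that the chosen form of $f$, the orientation convention in the wedge product, and the sign in the structure equation combine so as to reproduce precisely the signs displayed in \eqref{eq73}. There is no conceptual obstacle --- these are simply the Maurer--Cartan equations of $SO(3)$ transported through $f$ --- so I expect this sign check to be the only step that really requires care.
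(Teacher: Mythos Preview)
Your argument is correct: identifying $\oq=f(Q^{-1}\rmd Q)$ and then reading off the entries of the matrix Maurer--Cartan identity $\rmd\Theta=-\Theta\wedge\Theta$ yields precisely \eqref{eq73}, and your explicit check of the $(2,3)$, $(3,1)$, $(1,2)$ entries goes through with the stated signs.

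The paper, however, does not argue via the matrix form $Q^{-1}\rmd Q$. It takes the dual route: it introduces the left-invariant vector fields $w^1,w^2,w^3$ whose spins are the standard basis vectors of $\bR^3$, computes their Lie brackets from \eqref{eq51} (obtaining $[w^1,w^2]=w^3$ and cyclic), observes that $\{\oq_i\}$ is the dual coframe, and then invokes the Cartan formula for the exterior derivative of a $1$-form in terms of brackets (the footnoted identity) to read off \eqref{eq73}. So both proofs are instances of the Maurer--Cartan structure equation for $SO(3)$, but yours computes it \emph{in coordinates on the group} (matrix entries of $Q^{-1}\rmd Q$), while the paper computes it \emph{in the Lie algebra} via the bracket of left-invariant fields. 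Your approach has the advantage of being a short mechanical calculation with an explicit sign check, and your alternative variant via $\rmd\be_j=\be_k\Theta_{kj}$ and $\rmd^2=0$ is equally efficient; the paper's approach is a bit more intrinsic and ties directly into \eqref{eq51}, emphasising that \eqref{eq73} is nothing more than the cross-product structure on $\bR^3\cong\mathfrak{so}(3)$ written in the dual language.
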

\begin{proof}
The forms $\oq_1,\oq_2,\oq_3$ give a basis in the space of left invariant 1-forms on $SO(3)$. Let us introduce the left invariant fields $w^1,w^2,w^3$ such that the spin of $w^i$ is $\bi_i \in \bR^3$. The fields bracket $[w^1,w^2]$ is also left invariant and its spin due to \eqref{eq51} is $\bi_1{\times}\bi_2=\bi_3$, therefore
\begin{equation}\label{eq74}
  [w^1,w^2]=w^3.
\end{equation}
Analogously,
\begin{equation}\label{eq75}
  [w^2,w^3]=w^1, \qquad [w^3,w^1]=w^2.
\end{equation}
Now equations \eqref{eq73} follow from \eqref{eq74} and \eqref{eq75} since, obviously, the basis $\{\oq_1,\oq_2,\oq_3\}$ is dual to~$\{w^1,w^2,w^3\}$.
\end{proof}

Let us substitute \eqref{eq73} in \eqref{eq72} and restrict the form $\mg$ to the horizontal subspace $J_0$. The restriction is obtained just by replacing $\oq_i$ with $\oq_i^0$. We get
\begin{equation*}
\begin{array}{l}
\ds  \mg|{J_0}= \frac{1}{I_1\aq_1^2+I_2\aq_2^2+I_3\aq_3^2}\left[ I_1 \rmd \aq_1 \wedge \oq_1^0+ I_2 \rmd \aq_2 \wedge \oq_2^0+I_3 \rmd \aq_3 \wedge \oq_3^0 - \right.\\
\qquad   \left. - (I_1 \aq_1 \oq_2^0\wedge \oq_3^0+ I_2 \aq_2 \oq_3^0\wedge \oq_1^0+I_3 \aq_3 \oq_1^0\wedge \oq_2^0)\right].
\end{array}
\end{equation*}
Here we used the above mentioned property $I_1\aq_1\oq^0_1+I_2\aq_2\oq^0_2+I_3\aq_3\oq^0_3=0$. To find the form of gyroscopic forces of the reduced system $\tl{X}_k$, let us use the diffeomorphism \eqref{eq69}. We get
\begin{equation}\label{eq76}
\begin{array}{l}
\ds  k\,\tl{\mg}=k \frac{(I_2+I_3-I_1)I_1\aq_1^2+(I_3+I_1-I_2)I_2\aq_2^2+(I_1+I_2-I_3)I_3\aq_3^2}{(I_1\aq_1^2+I_2\aq_2^2+I_3\aq_3^2)^2} \times \\
   \qquad \times (\aq_1 \rmd \aq_2 \wedge \rmd \aq_3+\aq_2 \rmd \aq_3 \wedge \rmd \aq_1+\aq_3 \rmd \aq_1 \wedge \rmd \aq_2).
\end{array}
\end{equation}

The amended potential is found from \eqref{eq45}, \eqref{eq58}, \eqref{eq61}, and \eqref{eq63},
\begin{equation}\label{eq77}
\ds  \tl{V}_k(\aq_1,\aq_2,\aq_3)=  \tl{V}(\aq_1,\aq_2,\aq_3)+\frac{k^2}{2(I_1\aq_1^2+I_2\aq_2^2+I_3\aq_3^2)}.
\end{equation}

Finally, the reduced system in the dynamics of a rigid body is a mechanical system with gyroscopic forces
\begin{equation*}
  (S^2,\tl{m},\tl{V}_k, k\,\tl{\mg})
\end{equation*}
the elements of which are defined by \eqref{eq70}, \eqref{eq77}, and \eqref{eq76}.

Note that in the expression for the form of gyroscopic forces \eqref{eq76}, the multiplier
\begin{equation*}
  \aq_1 \rmd \aq_2 \wedge \rmd \aq_3+\aq_2 \rmd \aq_3 \wedge \rmd \aq_1+\aq_3 \rmd \aq_1 \wedge \rmd \aq_2
\end{equation*}
is the volume form of $S^2$ induced from $\bR^3$ and the coefficient in front of it, in the case $k\ne 0$, has constant sign on the sphere in virtue of the triangle inequalities for the inertia moments. Using Proposition~\ref{prop5} we get the following interesting property of the trajectories of the vertical direction vector on the Poison sphere: trajectories having the energy constant $h$ do not have inflection points in the metric $\tl{m}_h$. Standing on the outer side of the sphere we see that trajectories turn to the right of the corresponding geodesics when $k>0$ and to the left when $k<0$.

\end{document}